\newtheorem{thm}{Theorem}[section]
\newtheorem{cor}[thm]{Corollary}
\newtheorem{lem}[thm]{Lemma}
\newtheorem{prop}[thm]{Proposition}
\theoremstyle{definition}
\newtheorem{defin}[thm]{Definition}
\theoremstyle{definition}
\newtheorem{exm}[thm]{Example}
\theoremstyle{remark}
\newtheorem*{rem}{Remark}
\DeclareMathOperator{\colim}{colim}
\DeclareMathOperator{\hocolim}{hocolim}
\newcommand{\Z}{{\mathbb Z}}
\newcommand{\Q}{{\mathbb Q}}
\newcommand{\F}{{\mathbb F}}
\newcommand{\FF}{{\mathcal F}}
\begin{document}

\title[Simplicial spaces of homomorphisms]{Commuting elements, simplicial spaces and filtrations of classifying spaces}

\author[Alejandro Adem]{Alejandro Adem$^{*}$}
\address{Department of Mathematics,
University of British Columbia, Vancouver BC V6T 1Z2, Canada}
\email{adem@math.ubc.ca}
\thanks{$^{~~*}$Partially supported by NSERC}

\author[Frederick R.~Cohen]{Frederick R.~Cohen$^{**}$}
\address{Department of Mathematics,
University of Rochester, Rochester NY 14627, USA}
\email{cohf@math.rochester.edu}
\thanks{$^{~**}$Partially supported by DARPA and the NSF}

\author[Enrique Torres Giese]{Enrique Torres Giese}
\address{Department of Mathematics,
University of Michigan, Ann Arbor MI 48109, USA}
\email{etorresg@umich.edu}

\date{\today}

\begin{abstract}
Let $G$ denote a topological group. In this article the descending
central series of free groups are used to construct simplicial
spaces of homomorphisms with geometric realizations $B(q,G)$
that provide a filtration
of the classifying space $BG$. In particular this setting gives rise to a 
single space constructed out of all the spaces of 
ordered commuting $n$--tuples of elements
in $G$.
Basic properties of these constructions are discussed, including the
homotopy type and cohomology when the group $G$ is either a finite group
or a compact connected Lie group. For a finite group
the construction gives rise to a covering
space with monodromy 
related to a
delicate result in group theory equivalent to the odd-order theorem of
Feit-Thompson. The techniques here also yield a counting formula for the
cardinality of $Hom(\pi, G)$ where $\pi$ is any descending central
series quotient of a finitely generated free group.
Another application is the determination of
the structure of the spaces $B(2,G)$ obtained from commuting $n$-tuples in $G$ 
for finite groups such that the centralizer of every non--central element
is abelian
(known as transitively commutative groups), which played a key role
in work by Suzuki on the structure of finite simple groups.
\end{abstract}

\maketitle
\tableofcontents

\section{Introduction}

Let $G$ denote a topological group. The classifying 
space $BG$ plays a central role in algebraic topology with
important applications to bundle theory and cohomology of groups.
In this paper a filtration of $BG$ is introduced by using the
descending central series of the free groups. A basic feature is
that if $\FF_n$ is the free group on $n$ generators with $\Gamma^q$
the $q$--th stage of its descending central series, then the spaces
of homomorphisms $Hom(\FF_n/\Gamma^q, G)$ can be assembled to form
simplicial spaces with geometric realizations $B(q,G)$ which filter the usual
classifying space $BG$. In other words there are inclusions
$$B(2,G)\subset B(3,G)\subset\dots \subset B(q,G)\subset B(q+1,G)\subset
\dots \subset B(\infty, G)=BG$$
where each term is constructed from
the simplicial spaces associated to terms in the descending central
series of the free group. This naturally gives rise to a functorial
construction on topological groups $G\mapsto B(q,G)$. In fact the
construction provided here affords a principal $G$--bundle
$E(q,G)\to B(q,G)$ which fits into a commutative diagram, namely
there are natural morphisms of principal $G$-bundles
$$\xymatrix{
E(q,G)\ar[r]^{e_q}\ar[d]^p & E(q+1,G)\ar[d]^{p}\ar[r] & EG\ar[d] \\
B(q,G)\ar[r]^{b_q} & B(q+1,G)\ar[r] & BG }$$ and the maps $e_q, b_q$
yield a natural filtration of subspaces for $EG$ and $BG$.

When $G$ is a finite group 
the spaces developed here
give finite, regular covering spaces with structure group $G$.
General properties 
arise from features of the monodromy for these covers; one striking fact
about the structure is that an elementary property of this natural
monodromy is equivalent (see Proposition \ref{prop:on.feit.thompson}) 
to the Feit-Thompson Theorem about
solvability  
for groups of odd order \cite{Feit-Thompson}.
The structure of this monodromy will be
developed more fully elsewhere, but is addressed here in 
the case of 
finite \textsl{transitively commutative} groups, which in the
nonabelian case are characterized by the property
that all
non--central
elements have abelian centralizers. 
These groups were shown to be solvable by Suzuki \cite{Suzuki} when they have
odd order, a seminal result that played a key role in the subsequent proof of the
Feit--Thompson Theorem (see the survey
paper \cite{Solomon},
pages 324--325 for an explanation of this).  

It turns out that the spaces $B(q,G)$ have many interesting features 
which are explored in
this article; the goal is to introduce these objects and describe 
both their similarities and differences when compared to standard
classifying spaces, connecting them whenever possible to basic 
properties of spaces of homomorphisms.
This will involve different aspects of homotopy theory, group cohomology
and group theory. The main focus will be to illustrate these features
in basic examples such as finite groups and connected Lie groups.

The paper is organized as follows.
Section \ref{sec:simplicial.structure} gives proofs of the
simplicial structure of certain spaces of homomorphisms as well as
the construction of $B(q,G)$ as the geometric realization of a
simplicial space. Section \ref{sec:general.properties} gives 
general homological, bundle theoretic and geometric
properties of the spaces $E(q,G)$
and $B(q,G)$.
Section \ref{sec:homotopy} provides a verification that
$B(q,G)$ is a natural colimit which is weakly equivalent
to a more tractable
homotopy colimit. This in turn maps to the classifying space
of the colimit of all
subgroups in $G$ of nilpotence class less than $q$,
with fibre a finite dimensional, possibly
contractible, complex (Theorem \ref{thm:hocolim.to.colim}).
In Section \ref{sec:counting} a natural stable splitting for certain
spaces of homomorphisms is stated, which follows from \cite{ABBCG}
and \cite{AC} under appropriate
conditions.
In the special case of 
a finite group $G$, these stable decompositions count the
cardinality of $Hom(\FF_n/\Gamma^q,G)$ (Corollary 
\ref{cor:cardinality.of.Hom.F.mod.Gamma.q}).
The subject of Section \ref{sec:connected.Lie} is $B(q,G)$
for a connected Lie groups $G$. For $G$ compact this section gives a
computation of the rational cohomology of $B(2,G)$ 
(Theorem \ref{thm:cohomology.Lie}).
It is also shown that the loop space of
$B(q,G)$ admits a natural, non-trivial product decomposition
$G\times\Omega E(q,G)\simeq \Omega B(q,G)$ (Theorem \ref{thm:product.decomposition}).
On the other hand, in case $G$ is finite, the
homological structure of $B(q,G)$ arises as a natural sub-chain
complex of the classical algebraic bar construction. That connection
is developed in Section \ref{sec:cohomology.Feit-Thompson}. In addition, the
first homology group of $B(q,G)$ as well as the monodromy of the
finite cover $E(q,G) \to B(q,G)$ have a close connection to the
Feit-Thompson odd-order theorem, and it is described here (Proposition
\ref{prop:on.feit.thompson}).
This naturally connects to Section \ref{sec:TC.groups} of this paper,
where the structure of $E(2,G)\to B(2,G)$ for transitively
commutative groups is described.
As has been mentioned above, these groups are key examples in finite
group theory. Explicit descriptions are given for these
spaces using an action on a tree (Proposition \ref{prop:TC.tree}). 

The third author would like to thank Fred Cohen for his hospitality
and support given during a stay at the University of Rochester in
Spring 2008.

\section{Simplicial Spaces of Homomorphisms}\label{sec:simplicial.structure}

The goal of this section is to define a family of simplicial spaces
assembled using spaces of homomorphisms. In fact, this family will
yield a filtration of the bar construction of the classifying space
of a group and it will be parametrized by the lower central series
of the free groups.

\begin{defin}
Let $Q$ be a group, define a chain of subgroups $\Gamma^r(Q)$
inductively: $\Gamma^1(Q)=Q$; $\Gamma^{i+1} (Q) = [\Gamma^i(Q),Q]$.
By convention, $\Gamma^\infty(Q) =\{1\}$. The descending central
series of $Q$ is the normal series
$\dots \subset \Gamma^{i+1}(Q)\subset \Gamma^i(Q)\subset  \dots \subset
\Gamma^2(Q)\subset\Gamma^1(Q)=Q$.
\end{defin}

A discrete group $Q$ is said to be \textsl{nilpotent} if there is
some integer $m$ such that $\Gamma^{m+1}(Q)=\{1\}$. The least such
integer $m$ is called the \textsl{nilpotency class} of $Q$. 
For example every
finite $p$--group is nilpotent.

When the context is clear $\Gamma^r$ will be used to denote these
normal subgroups of $Q$. Let $\FF_n = \FF[r_0,r_1,\cdots,r_{n-1}]$
denote the free group on $n$-letters. Fix an integer $q$ and let
$\Gamma^q$ denote the $q$-stage of the descending central series for
$\FF_n$, as above.

Let $G$ denote a topological group; in this paper it will be assumed
throughout that: (i) $G$ is locally compact as well as Hausdorff;
and that (ii) $1 \in G$ is a non-degenerate basepoint. Consider the
set $Hom(\FF_n/\Gamma^q, G)$ of homomorphisms $f\colon \FF_n\to G$ which
descend to homomorphisms $\bar f\colon \FF_n/\Gamma^q \to G$, i.e.
such that $f(\Gamma^q)=1$. An element of $Hom(\FF_n/\Gamma^q, G)$ is
specified by functions from the set $\{r_0,r_1,\cdots,r_{n-1}\}$ to
$G$. Thus an element in $Hom(\FF_n/\Gamma^q, G)$ is identified as an
ordered $n$-tuple $(x_0,x_1,\cdots,x_{n-1})$ with $f(r_i) = x_i$ and
subject to $f(\Gamma^q)=1$.
The space $Hom(\FF_n/\Gamma^q, G)$ is topologized as a subspace of
$G^n$ with the naturally inherited topology for general $G$.

Two simplicial spaces associated to these homomorphisms will now be
introduced.

\begin{defin}\label{defin:E.q.G}
Let
$$E_n(q,G)=G\times Hom(\FF_n/\Gamma^q,G)\subset G^{n+1},$$
and define $d_i:E_n(q,G)\to E_{n-1}(q,G)$ for $0\le i\le n$ and
$s_j:E_n(q,G)\to E_{n+1}(q,G)$, for $0\leq j\leq n$, given by

$$d_i(g_0,\ldots,g_n)=\left\lbrace\begin{array}{lr}
(g_0,\ldots,g_i\cdot g_{i+1},\ldots,g_n) & 0\leq i<n \\
(g_0,\ldots,g_{n-1}) & i=n\\
 \end{array}\right. $$
and
$s_j(g_0,\ldots,g_n)=(g_0,\ldots,g_i,1,g_{i+1},\ldots,g_n)$
for $0\le j\le n$.
\end{defin}

\begin{defin}\label{defin:B.q.G}
Similarly, let
$B_n(q,G)=Hom(\FF_n/\Gamma^q,G)$
with maps $d_i$ and $s_j$ defined in the same way, except that the
first coordinate $g_0$ is omitted and the map $d_0$ takes the form
$d_0(g_1,\ldots,g_n)=(g_2,\ldots,g_n)$.
\end{defin}

\begin{lem}\label{lem:simplicial.space} The maps $d_i$, $s_j$ defined on the spaces $E_n(q,G)$ and
$B_n(q,G)$ are well-defined and equip them with the structure of
simplicial spaces.
\end{lem}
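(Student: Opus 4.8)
The plan is to exhibit $B_*(q,G)$ and $E_*(q,G)$ as simplicial \emph{subspaces} of the two standard simplicial spaces whose realizations are $BG$ and $EG$. The formulas for $d_i$ and $s_j$ in the two definitions are verbatim the structure maps of the usual bar construction: multiplication of two adjacent coordinates, deletion of an end coordinate, and insertion of the identity. Since these maps are built from the continuous operations of multiplication and projection on $G^{n}$ and $G^{n+1}$, continuity is immediate, and the simplicial identities $d_i d_j = d_{j-1}d_i$ ($i<j$), $s_i s_j = s_{j+1}s_i$ ($i\le j$), and the mixed relations will be inherited from the ambient nerve. The only genuine content of the lemma is therefore \emph{well-definedness}: one must check that each face and degeneracy map carries the subspace $\Hom(F_n/\Gamma^q,G)\subset G^n$ into the corresponding subspace $\Hom(F_{n\mp1}/\Gamma^q,G)$.

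To organize this I would use the cosimplicial free group. Identifying $\Hom(F_n,G)\cong G^n$ via $f\mapsto(f(e_0),\dots,f(e_{n-1}))$, each of the maps $d_i, s_j$ above is precomposition with a group homomorphism of free groups: a coface $\delta^i\colon F_{n-1}\to F_n$ for $d_i$ and a codegeneracy $\sigma^j\colon F_{n+1}\to F_n$ for $s_j$. Explicitly, $\delta^i$ fixes the generators below index $i$, sends the $i$-th generator to the product $e_i e_{i+1}$ (this is the source of the multiplication in $d_i$), and shifts the higher generators, while $\sigma^j$ collapses one generator to $1$ and relabels the rest. These homomorphisms satisfy the cosimplicial identities; this is precisely the classical statement that $\{G^n\}$ with the maps above is the nerve of $G$, and it disposes of the simplicial identities once and for all.

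The decisive step is the functoriality of the lower central series. For any group homomorphism $\varphi\colon H\to K$ one has $\varphi(\Gamma^q(H))\subseteq\Gamma^q(K)$; this follows by induction on $q$ from $\varphi([a,b])=[\varphi(a),\varphi(b)]$ together with $\Gamma^{q+1}(H)=[\Gamma^q(H),H]$. Applying this to the cofaces and codegeneracies shows that each $\delta^i$ and $\sigma^j$ descends to a homomorphism of the quotients $F_{\bullet}/\Gamma^q$, so that $\{F_n/\Gamma^q\}_n$ is itself a cosimplicial group. Now if $f\colon F_n\to G$ kills $\Gamma^q(F_n)$, then $f\circ\delta^i$ kills $\Gamma^q(F_{n-1})$ because $f(\delta^i(\Gamma^q(F_{n-1})))\subseteq f(\Gamma^q(F_n))=\{1\}$, and likewise for $\sigma^j$. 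Hence $d_i, s_j$ preserve $\Hom(F_\bullet/\Gamma^q,G)$, establishing well-definedness for $B_*(q,G)$.

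Finally, for $E_*(q,G)=G\times\Hom(F_n/\Gamma^q,G)$ the argument is the same applied coordinatewise: the free factor $g_0\in G$ is unconstrained, the modified $d_0$ merely multiplies $g_0$ into the next coordinate while otherwise acting as the $B_*$-face map on $(g_1,\dots,g_n)$, and insertion of identities respects the product decomposition. Thus the homomorphism coordinates of any $d_i$ or $s_j$ output again lie in $\Hom(F_{n\mp1}/\Gamma^q,G)$ by the previous paragraph. I expect the crux to be entirely concentrated in the functoriality of $\Gamma^q$; although that input is elementary, it is the one non-formal ingredient, and everything else — continuity and the simplicial identities — is inherited from the bar construction.
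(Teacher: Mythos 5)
Your proposal is correct and follows essentially the same route as the paper's proof: the structure maps are precompositions with homomorphisms between free groups, well-definedness comes from the functoriality $\varphi(\Gamma^q(H))\subseteq\Gamma^q(K)$ of the lower central series, and the simplicial identities are inherited from the classical bar construction. The only difference is that you spell out the cosimplicial free group explicitly, which the paper leaves implicit.
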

\begin{proof} The maps $d_i$ and $s_i$ are well-defined since they are
induced by group homomorphisms between free groups and from the fact
that if $f:A\to B$ is a group homomorphism then
$f(\Gamma^qA)\subseteq \Gamma^qB$. The simplicial identities follow
(as in the classical case for the bar construction) from the
definition of the homomorphisms inducing the maps $d_i$ and $s_j$.
\end{proof}

Note that the map $G^{n+1}\to G^n$ that projects the last $n$
coordinates onto $G^n$ defines a simplicial map
$p_*:E_*(q,G)\to B_*(q,G)$.
Moreover, $G$ acts from the left on $E_n(q,G)$ by multiplication on
the first coordinate
$g(g_0,g_1\ldots,g_n)=(gg_0,g_1,\ldots,g_n)$
and this action makes $E_*(q,G)$ into a $G$-simplicial space; that
is, the action of $G$ commutes with the face and degeneracy maps.
This action is free and its degree--wise orbit space is homeomorphic
to $B_*(q,G)$ \cite{milgram}.
Next recall the notion of geometric realization for a simplicial space
\cite{milnor}.

\begin{defin}
The geometric realization of a simplicial space $Z_*$ is the
following topological space
$|Z_*| := \coprod_{n\ge 0} Z_n \times \Delta^n / \sim $,
where $\Delta^n$ denotes the $n$--simplex and the equivalence
relation $\sim$ is defined as follows. Identify $(x,\delta_it)\in
X_n\times\Delta^n$ with $(d_ix,t)\in X_{n-1}\times \Delta^{n-1}$ for
any $x\in X_n$, $t\in \Delta^{n-1}$ and $(x,\sigma_jt) \in
X_n\times\Delta^n$ with $(s_jx, t) \in X_{n+1}\times \Delta^{n+1}$
for any $x\in X_{n-1}$ and $t\in \Delta^{n+1}$. The topology on
$|Z_*|$ is the quotient topology.
\end{defin}

\begin{defin}
The realization $|Z_*|$ of a simplicial space has a canonical
filtration defined as follows: $F_r|Z_*|\subset |Z_*|$ is the image
of $\coprod_{0\le i\le r} Z_i\times\Delta^i$ in the quotient space.
The associated graded space is defined as $E^0_j (|Z_*|) =
F_j|Z_*|/F_{j-1}|Z_*|$.
\end{defin}

\begin{exm}\label{exm:EG.to.BG}
In the case of Definitions \ref{defin:E.q.G}, and \ref{defin:B.q.G}
with $q = \infty$, $|E_*(\infty,G)|=EG$, $|B_*(\infty,G)|=BG$ and
$p$ is the standard map $EG\to BG$. In other words, this yields
Milgram's model for the universal principal $G$--bundle $EG\to BG$
(\cite{MacLane}, \cite{Steenrod}, and \cite{AM}, page 49). Notice
that the spaces $E_*(q,G)$ and $B_*(q,G)$ are simplicial subspaces
of the bar construction of $G$.

\end{exm}

\begin{defin} The geometric realizations of $E_*(q,G)$ and $B_*(q,G)$
are denoted by $E(q,G)$ and $B(q,G)$ respectively.
\end{defin}

Note that the map $p:E(q,G)\to B(q,G)$ induced by $p_*$ is a principal
$G$--bundle as it can be thought of as the natural pullback of the
principal bundle $EG\to BG$ using the inclusion $B(q,G)\to BG$
(see \cite{Steenrod}, page 364, Theorem 8.3).
The following definition will be used later and is useful for the
analysis of the local structure of the map $p$ on the level of
geometric realizations.

\begin{defin}\label{defin:filtrations} Let $S_n(j,q,G)$ be the subspace of 
$Hom(\FF_n/\Gamma^q,G)$ obtained by taking
$n$-tuples with at least $j$ coordinates equal to $1_G$. 
\end{defin}

\begin{prop} Let $G$ be a topological group.
\begin{itemize}
\item[1)] The natural surjection $\FF_n/\Gamma^{q+1}\to \FF_n/\Gamma^q$
induces a map of simplicial spaces compatible with the simplicial
maps $p_n$; that is, there is a commutative diagram
$$\xymatrix{
E_n(q,G)\ar[r]^{e_q}\ar[d]^{p_n} & E_n(q+1,G)\ar[d]^{p_n} \\
B_n(q,G)\ar[r]^{b_q} & B_n(q+1,G) }.$$ 
\item[2)] There are natural morphisms of principal $G$-bundles
$$\xymatrix{
E(q,G)\ar[r]^{e_q}\ar[d]^p & E(q+1,G)\ar[d]^{p}\ar[r] & EG\ar[d] \\
B(q,G)\ar[r]^{b_q} & B(q+1,G)\ar[r] & BG }$$
\item[3)] The maps $e_q, b_q$ yield a natural filtration of subspaces for
$EG$ and $BG$.
\item[4)] If $G$ is a nilpotent group of class $c$,
then $E(c+1,G)=EG$, $B(c+1,G)=BG$ and the filtration is
finite. If $G$ is finitely generated, then the filtration is finite
if and only if $G$ is nilpotent.
\end{itemize}
\end{prop}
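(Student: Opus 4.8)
The plan is to treat (1)--(3) as formal consequences of functoriality and of the bundle statement already established, reserving the real work for (4). For (1) I would observe that the canonical projection $F_n/\Gamma^{q+1}\to F_n/\Gamma^q$ induces $b_q$ by precomposition; concretely, since $\Gamma^{q+1}\subseteq\Gamma^q$, any $f\colon F_n\to G$ with $f(\Gamma^q)=1$ automatically satisfies $f(\Gamma^{q+1})=1$, so $b_q$ is just the inclusion of $Hom(F_n/\Gamma^q,G)$ into $Hom(F_n/\Gamma^{q+1},G)$ as subspaces of $G^n$. The face and degeneracy maps are induced by the same homomorphisms of free groups used in the bar construction, and these are natural, hence commute with the quotients $F_n/\Gamma^{q+1}\to F_n/\Gamma^q$; this is precisely the statement that $b_q$ (and likewise $e_q$, which leaves the extra first coordinate untouched) is simplicial. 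The square with $p_n$ commutes because $p_n$ only forgets the first coordinate while $e_q,b_q$ act coordinatewise as inclusions.

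For (2) I would apply geometric realization, which is functorial, to the commuting square of (1); since the left $G$-action on first coordinates is preserved by $e_q$ and $p$ is a principal $G$-bundle by the previous Proposition, the realized maps are morphisms of principal $G$-bundles. The maps to $EG$ and $BG$ arise in the same way from the quotients $F_n\to F_n/\Gamma^q$ (equivalently the inclusions $Hom(F_n/\Gamma^q,G)\hookrightarrow G^n$), recognizing $EG,BG$ as $|E_*(\infty,G)|,|B_*(\infty,G)|$ as in the Remark. Part (3) is then immediate: each $b_q$ and $e_q$ is injective in every simplicial degree, so its realization is an inclusion, and the composites exhibit the nested filtration $B(2,G)\subseteq B(3,G)\subseteq\cdots\subseteq BG$ (and similarly for $EG$).

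The substance is in (4). For the forward implication I would use functoriality of the lower central series: if $G$ is nilpotent of class $c$, then $\Gamma^{c+1}(G)=1$, so any $f\colon F_n\to G$ satisfies $f(\Gamma^{c+1}(F_n))\subseteq\Gamma^{c+1}(G)=1$. Hence $Hom(F_n/\Gamma^{c+1},G)=Hom(F_n,G)=G^n$ for every $n$, the simplicial spaces $B_*(c+1,G)$ and $B_*(\infty,G)$ coincide degreewise, and therefore $|B_*(c+1,G)|=BG$ (and $|E_*(c+1,G)|=EG$); the filtration stabilizes at stage $c+1$ and is finite.

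For the converse, with $G$ finitely generated, I would read ``finite filtration'' as $B(q,G)=BG$ for some finite $q$ and aim to recover degreewise equality. Since $b\colon|B_*(q,G)|\hookrightarrow BG$ is the realization of a degreewise inclusion of simplicial spaces with non-degenerate basepoint, it carries the skeletal filtration and, on the open $n$-th stratum, restricts to the inclusion of $(Hom(F_n/\Gamma^q,G)-S_n(1,q,G))\times(\Delta_n-\partial\Delta_n)$ into its analogue for $BG$ described in the previous Proposition. Surjectivity of $b$ then forces every tuple with all coordinates nontrivial to lie in $Hom(F_n/\Gamma^q,G)$, while degenerate tuples (a coordinate equal to $e$) are handled by induction on $n$ via the degeneracies, giving $Hom(F_n/\Gamma^q,G)=G^n$ for all $n$. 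Choosing a surjection $f\colon F_d\twoheadrightarrow G$ (here finite generation is used), the identity $f(\Gamma^q(F_d))=\Gamma^q(G)$ together with $f\in Hom(F_d/\Gamma^q,G)$ yields $\Gamma^q(G)=1$, so $G$ is nilpotent. The hard part will be exactly this extraction of degreewise equality from equality of realizations, together with pinning down the correct meaning of ``finite'': the example of $A_5$---whose nilpotent subgroups are all abelian, so that $B(q,A_5)=B(2,A_5)\neq BA_5$ for every $q$---shows that a non-nilpotent group can have a filtration with only finitely many \emph{distinct} terms, so ``finite'' must mean that the filtration reaches $BG$, not merely that it is eventually constant.
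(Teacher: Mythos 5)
Your proposal is correct and follows essentially the same route as the paper: parts (1)--(3) are dispatched by functoriality of the constructions, the forward half of (4) uses $\Gamma^{c+1}(G)=\{1\}$ to get degreewise equality $Hom(F_n/\Gamma^{c+1},G)=G^n$, and the converse compares the open strata $(Hom(F_n/\Gamma^q,G)-S_n(1,q,G))\times(\Delta_n-\partial\Delta_n)$ of the two realizations and then applies a surjection $F_m\twoheadrightarrow G$ (with no generator sent to the identity) to conclude $\Gamma^q(G)=1$. Your observation that ``finite'' must mean the filtration reaches $BG$ rather than merely stabilizes (as the $A_5$ example shows) is exactly the reading implicit in the paper's argument, which starts from the hypothesis $|B_*(q,G)|=BG$.
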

\begin{proof}
Part (1) and (2) are immediate since the maps $e_q, b_q$ are induced
by group homomorphisms, whereas part (3) follows from (2). If $G$ is
nilpotent of class $c$, then $\Gamma^{c+1}(G)=\{ 1\}$ and thus
$Hom(\FF_n/\Gamma^{c+1},G)=G^n$ for all $n$. Hence $E(c+1,G)=EG$ and
$B(c+1,G)=BG$. If $B(q,G)=BG$ for some $q$, then for all $m$
$$F_mB(q,G)-F_{m-1}B(q,G)=F_mB(\infty,G)-F_{m-1}B(\infty,G).$$
The relations imposed by the face and degeneracy maps show that
$$F_mB(q,G)-F_{m-1}B(q,G)=
(Hom(\FF_m/\Gamma_q,G)-S_m(1,q,G))\times(\Delta_m-\partial\Delta_m)$$
so the formula above implies that
$$Hom(\FF_m/\Gamma_q,G)-S_m(1,q,G)=Hom(\FF_m,G)-S_m(1,\infty,G).$$

Suppose that $G$ is generated by $m$ elements and that $\phi:\FF_m\to
G$ is a homomorphism onto the generators of $G$. 
Note that $\phi\in Hom(\FF_m,G)-S_m(1,\infty,G)$. Therefore
$\Gamma^q(G)=\phi(\Gamma^q)=\{ 1\}$ and $G$ is nilpotent.
\end{proof}

\section{Further Properties of 
$Hom(\FF_{n}/\Gamma^{r}, G)$, $E(q,G)$ and $B(q,G)$}\label{sec:general.properties}

The functor from topological groups to topological spaces, given by
$G\mapsto B(q,G)$
has a number of interesting features, some of them analogous to the
classifying space functor but there are some important differences.

\begin{prop}\label{prop:general.properties} The functor $B(q,G)$
satisfies the following properties:

\begin{enumerate}

\item If $q \geq 2$ and $H$ is a topological group of
nilpotency class less than $q$, then $B(q,H) = BH$, the usual
classifying space.

\item If $G$ is a finite group then there exists
an $N$ that depends on $G$ such that $B(q,G)=B(N,G)$ for all $q\ge
N$.

\item If $q \geq 2$ and $\iota\colon H \to G$ is a homomorphism
where $H$ is a group of nilpotency class less than $q$, then there
is a commutative diagram
\[
\begin{CD}
 BH @>{1}>> BH      \\
 @VV{}V    @VV{B(\iota)}V       \\
B(q,G) @>{}>> BG.\\
\end{CD}
\] Thus if $BG$ is a stable retract of $BH$, then
$BG$ is a stable retract of $B(q,G)$.

\item If $q \geq 2$ and $Z$ is a central subgroup of $G$, then 
the multiplication
map $\mu: Z \times G \to G$ induces an action
$\mu:
BZ \times B(q,G)\to  B(q,G)$ together with a commutative diagram
\[
\begin{CD}
BZ \times BZ @>{}>> BZ     \\
 @VV{}V    @VV{I}V       \\
BZ \times B(q,G) @>{\mu}>> B(q,G).\\
\end{CD}
\]

\item The
functor $B(q, G)$ commutes with direct limits in $G$, i.e.
there are homeomorphisms
$$B(q,\varinjlim_{\alpha} G_{\alpha})
\cong \varinjlim_{\alpha} B(q, G_{\alpha}).$$

\end{enumerate}

\end{prop}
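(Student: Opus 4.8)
The plan is to verify each of the five clauses in turn, with the first four following quickly from the definitions and earlier results, and the fifth requiring a genuine (if standard) argument about representation spaces.

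For clause (1), the key observation is that if $H$ has nilpotency class less than $q$, then $\Gamma^q(H) = \{1\}$. I would argue exactly as in Proposition 1.7, part (4): since any homomorphism $f: F_n \to H$ satisfies $f(\Gamma^q) \subseteq \Gamma^q(H) = \{1\}$, every homomorphism $F_n \to H$ automatically descends to $F_n/\Gamma^q$, so $Hom(F_n/\Gamma^q, H) = Hom(F_n, H) = H^n$ in every simplicial degree. The two simplicial spaces $B_*(q,H)$ and $B_*(\infty,H)$ therefore agree degreewise with identical face and degeneracy maps, hence have equal realizations, and by the Remark after Proposition 1.5 the latter is $BH$. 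For clause (2), I would combine clause (1) with part (2) of Proposition 1.7: a finite group $G$ is nilpotent of some class $c$ precisely when $\Gamma^{c+1}(G) = \{1\}$, and since $G$ is finite its descending central series stabilizes, so taking $N = c+1$ gives $Hom(F_n/\Gamma^q, G) = Hom(F_n/\Gamma^{q'}, G)$ for all $q, q' \geq N$; the induced maps $b_q$ are then identities on each simplicial level.

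For clause (3), the lower triangle $B(q,G) \to BG$ is the natural filtration map from Proposition 1.7(2), the square $BH \to BH$ on top is the identity, and the vertical map $BH = B(q,H) \to B(q,G)$ is $B(q,\iota)$, using the functoriality established just before Proposition \ref{prop:general.properties} together with clause (1) to identify $B(q,H)$ with $BH$. Commutativity is then the statement that the filtration maps are natural in $G$, which holds because they are induced levelwise by group homomorphisms. The stable-retract assertion is a formal consequence: a stable retraction $BH \to BG \to BH$ composes with the diagram to factor the identity of $BG$ through $B(q,G)$. For clause (4), a central subgroup $Z$ has nilpotency class $1 < q$, so $B(q,Z) = BZ$ by clause (1); the multiplication $\mu: Z \times G \to G$ is a group homomorphism because $Z$ is central, and applying the functor $B(q,-)$ together with the homeomorphism $B(q, Z \times G) \cong B(q,Z) \times B(q,G)$ yields the desired action map and the commuting square with the diagonal $BZ \to BZ \times BZ$.

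The main work is clause (5). I would first prove the homeomorphism $Hom(\pi, \varinjlim_\alpha G_\alpha) \cong \varinjlim_\alpha Hom(\pi, G_\alpha)$ for finitely generated $\pi$, say with generating set $\{g_1,\dots,g_m\}$ and a finite set of defining relations. A homomorphism out of $\pi$ is determined by the images of the generators subject to the relations, so $Hom(\pi, G)$ sits inside $G^m$ as the closed subspace cut out by finitely many word-equations; taking $\pi = F_n/\Gamma^q$ recovers the spaces of the simplicial construction. The heart of the matter is commuting the two constructions: a compatible system of homomorphisms into the $G_\alpha$ assembles to a homomorphism into the colimit, and conversely any homomorphism $\pi \to \varinjlim G_\alpha$ has image landing in some finite stage because $\pi$ is finitely generated (each of the finitely many generators factors through some $G_\alpha$, and by cofinality a single $\alpha$ works). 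The hard part will be verifying that this bijection is a \emph{homeomorphism} rather than merely a continuous bijection: one must check that the word-equations defining $Hom(\pi,-)$ interact correctly with the colimit topology on $\varinjlim_\alpha G_\alpha$, which is where the local-compactness hypothesis on $G$ and the care over the topology of $G^m$ enter. Once the degreewise homeomorphism is in hand, it is simplicial (compatible with all $d_i, s_j$, since these are induced by fixed maps of free groups independent of $\alpha$), and geometric realization commutes with filtered colimits of simplicial spaces, giving $B(q, \varinjlim_\alpha G_\alpha) \cong \varinjlim_\alpha B(q, G_\alpha)$.
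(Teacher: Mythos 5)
Your treatments of clauses (1), (3), and (4) are correct and match what the paper dismisses as "easy to verify," and your discussion of clause (5) is a reasonable elaboration of an assertion the paper does not prove at all (one quibble: finitely generated does not mean finitely presented, and your "finite set of defining relations" is doing real work — to push all the relations, not just the generators, down to a single finite stage of the colimit you need finite presentation or injectivity of the maps in the directed system; this is harmless in the intended applications since $F_n/\Gamma^q$ is a finitely generated nilpotent group, hence finitely presented). The genuine problem is clause (2), which is precisely the one clause the paper proves in detail. You take $N = c+1$ where $\Gamma^{c+1}(G) = \{1\}$, but such a $c$ exists only when $G$ is nilpotent, whereas clause (2) is asserted for \emph{every} finite group. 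If instead one reads your $c+1$ as the stage at which the descending central series of $G$ stabilizes (for a finite group it stabilizes at a generally nontrivial subgroup), the claim becomes false: take $G = SL_2(\mathbb F_5)$, which is perfect, so its descending central series is constant at $G$ and your recipe would give $N=2$; yet a surjection $F_2 \to Q_8 \subset G$ onto a Sylow $2$-subgroup lies in $Hom(F_2/\Gamma^3,G)$ but not in $Hom(F_2/\Gamma^2,G)$, so $B(2,G) \neq B(3,G)$. The stabilization of $G$'s own lower central series simply has no bearing on the stabilization of the filtration; what matters is a uniform bound on the classes of the nilpotent \emph{subgroups} of $G$.

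The paper's proof chooses $N$ to be the minimal integer such that every nilpotent subgroup of $G$ has nilpotence class strictly less than $N$ (this exists because a finite group has finitely many subgroups). The key observation is that $\phi\colon F_n \to G$ descends to $F_n/\Gamma^q$ exactly when $H = \mathrm{Im}\,\phi$ is nilpotent of class $< q$; since $H$ is a subgroup of $G$, once it is nilpotent at all its class is automatically $< N$, so $Hom(F_n/\Gamma^{N+t},G) = Hom(F_n/\Gamma^N,G)$ for all $t \geq 1$ in every simplicial degree, and the realizations agree. Your argument, by contrast, only establishes the paper's closing remark — that for \emph{nilpotent} $G$ the stable value is $BG$ — whereas for non-nilpotent finite $G$ the stable value $B(N,G)$ is never $BG$ (by part (4) of the earlier proposition, the filtration reaches $BG$ at a finite stage only if $G$ is nilpotent). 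You should replace your clause (2) argument with the subgroup-class bound above.
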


\begin{proof} These properties are easy to verify, in particular the stable
value $B(N,G)$ for the spaces $B(q,G)$ can be explained as follows.
Let $N$ denote the minimal integer such that the class of nilpotence
for any nilpotent subgroup of $G$ is strictly less than $N$. It
suffices to show that
$Hom(\FF_n/\Gamma^N, G)=Hom(\FF_n/\Gamma^{N+t},G)$
for any $t\ge 1$. Given a homomorphism $\phi : \FF_n\to G$, where
$\FF_n$ is a free group of rank $n$, let $H=\rm{Im}\phi$. Then $\phi$
will descend to a map $\FF_n/\Gamma^{N+t}\to G$ if and only if the
nilpotence class of $H$ is less than $N+t$. However, given that $H$
is nilpotent, its nilpotence class is in fact less than $N$; hence
$\phi$ represents a unique element in $Hom(\FF_n/\Gamma^N,G)$ and the proof
is complete. Note that if $G$ is nilpotent then the stable value 
of $B(q,G)$ is
$BG$.
\end{proof}

Recall that the map $c_g:BG\to BG$ induced by conjugation by $g\in
G$ is homotopic to the identity. It turns out that for $B(q,G)$ this
need not be the case. The reason is that $B_*(q,G)$ splits into a
lattice of nerves of categories so that conjugation may permute this
lattice. On the other hand, this difference indicates that when
comparing the cohomology of $BG$ with that of $B(q,G)$ it is only
necessary to look at the invariant part of the cohomology of
$B(q,G)$ as the following diagram
\[ \xymatrix{
B(q,G)\ar[r]\ar[d]^{c_g} & BG\ar[d]^{c_g} \\
B(q,G)\ar[r] & BG } \] commutes. Hence the map $H^*(BG;\Z)\to
H^*(B(q,G);\Z)$ factors through $H^*(B(q,G);\Z)^G$.

The next proposition deals with basic facts about the cohomology of
the spaces $B(q,G)$. These have close connections to properties of
ordinary group cohomology.

\begin{prop}\label{prop:general.cohomological.properties} The functors
$B(q,G)$ satisfy the following properties:

\begin{enumerate}

\item If $G$ is a finite group with mod $p$ cohomology detected
by subgroups of nilpotence class less than $q$, then the map
induced on mod $p$ cohomology $$H^*(BG;\mathbb
F_p)\to H^*(B(q,G);\mathbb F_p)$$ 
is a monomorphism.

\item If $G$ is a finite group, then for any
$q\ge 2$, the map induced on mod-$p$ cohomology
$$H^*(BG;\mathbb F_p) \to H^*(B(q,G);\mathbb F_p)$$ 
is a
monomorphism modulo nilpotent elements.

\item The spectral sequence obtained from the skeletal filtration
satisfies $$E^1_{s,t} =  \oplus H_{s+t}(F_sB(q,G),F_{s-1}B(q,G);
\mathbb Z)$$ and abuts to $H_{s+t}(B(q,G);\mathbb Z).$ In case $q
=\infty$, and coefficients are in a field $\mathbb F$, this spectral
sequence is the bar spectral sequence with
$E_{s,t}^2 =  {Tor_{s,t}}^{H_*(G)}(\mathbb F,\mathbb
F)$ which abuts to $H_{s+t}(BG;\mathbb F)$.
\end{enumerate}

\end{prop}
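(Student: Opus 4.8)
The plan is to handle the three parts with a shared naturality principle for (1) and (2) and the standard spectral sequence of a filtered space for (3).

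For part (1), the key observation is that any subgroup $A\subseteq G$ of nilpotence class less than $q$ satisfies $B(q,A)=BA$ by Proposition \ref{prop:general.properties}(1). First I would feed the inclusion $\iota\colon A\hookrightarrow G$ into the commutative square of Proposition \ref{prop:general.properties}(3), which shows that the composite $BA=B(q,A)\to B(q,G)\to BG$ coincides with the map $B\iota\colon BA\to BG$ induced by inclusion. Passing to mod $p$ cohomology, this says that for every such $A$ the restriction $\mathrm{res}^G_A\colon H^*(BG;\F_p)\to H^*(BA;\F_p)$ factors as $H^*(BG;\F_p)\xrightarrow{\phi^*}H^*(B(q,G);\F_p)\to H^*(BA;\F_p)$. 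Consequently $\ker\phi^*$ is contained in the kernel of the product map $\prod_A\mathrm{res}^G_A$ taken over all subgroups $A$ of nilpotence class less than $q$. The detection hypothesis says precisely that this product map is injective, so $\ker\phi^*=0$ and $\phi^*$ is a monomorphism.

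Part (2) is the same argument with Quillen's theorem supplying the detection. Every nontrivial elementary abelian $p$-subgroup $E\subseteq G$ is abelian, hence of nilpotence class $1<q$ for all $q\ge 2$, so $B(q,E)=BE$ and the factorization of $\mathrm{res}^G_E$ through $\phi^*$ holds exactly as above. By Quillen's theorem the common kernel of the restrictions to all elementary abelian $p$-subgroups is the nilradical of $H^*(BG;\F_p)$. Since each such restriction factors through $\phi^*$, we obtain $\ker\phi^*\subseteq\bigcap_E\ker(\mathrm{res}^G_E)=\sqrt{0}$, which is exactly the assertion that $\phi^*$ is a monomorphism modulo nilpotent elements.

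For part (3), I would invoke the exact couple associated to the skeletal filtration $F_{s-1}B(q,G)\subseteq F_sB(q,G)$ of the geometric realization: the long exact sequences of the pairs $(F_sB(q,G),F_{s-1}B(q,G))$ assemble into an exact couple whose spectral sequence has $E^1_{s,t}=H_{s+t}(F_sB(q,G),F_{s-1}B(q,G);\Z)$ and abuts to $H_{s+t}(B(q,G);\Z)$, which is formal and valid for every $q$. For $q=\infty$, where $B(\infty,G)$ is the bar construction $BG$, the $s$-th filtration quotient is up to homotopy $\Sigma^s$ of the $s$-fold smash power of $G$ based at $e$, so with field coefficients the Künneth theorem gives $E^1_{s,t}\cong(\widetilde H_*(G;\F)^{\otimes s})_t$, the normalized bar complex, with $d^1$ the alternating sum of face maps; recognizing this as the complex computing $\mathrm{Tor}$ yields $E^2_{s,t}=\mathrm{Tor}^{H_*(G)}_{s,t}(\F,\F)$. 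The main obstacle lives here: the filtration is infinite whenever $G$ is not nilpotent, so convergence must be argued, and the identification of the filtration quotients with suspended smash powers requires that $B_*(q,G)$ be a proper (good) simplicial space. This is exactly where the standing hypothesis that $1\in G$ is a non-degenerate basepoint enters, guaranteeing that each inclusion $F_{s-1}\hookrightarrow F_s$ is a cofibration and giving convergence that is strong in each fixed total degree.
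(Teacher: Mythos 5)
Your proposal is correct and follows essentially the same route as the paper: part (1) via the factorization of restriction maps through $B(q,A)=BA$ for subgroups $A$ of class less than $q$ (the paper's citation of Proposition \ref{prop:general.properties}), part (2) via the Quillen--Venkov nilpotent-kernel theorem applied to elementary abelian $p$-subgroups, and part (3) via the exact couple of the skeletal filtration, which the paper simply declares routine and well-known. The only quibble is your claim that $\bigcap_E\ker(\mathrm{res}^G_E)$ \emph{equals} the nilradical --- Quillen--Venkov only gives the containment into the nilpotent elements, but that is the direction your argument actually uses, so nothing is affected.
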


\begin{proof}
The first statement follows immediately from parts (1) and (2) of
the previous proposition. For the second statement it suffices to
prove it for $q=2$. Recall the result due to Quillen and Venkov,
namely that if $A_p(G)$ denotes the poset of non--trivial elementary
abelian $p$--subgroups of $G$ with morphisms induced by inclusion
and conjugation, then the map induced by the restrictions
$Q: H^*(BG;\mathbb F_p) \to \lim_{E\in A_p(G)} H^*(E;\mathbb F_p)$
has nilpotent kernel (see \cite{QP} and \cite{AM}, page 146). Note,
however, that this factors through the invariants of the mod $p$
cohomology of $B(q,G)$ for any $q\ge 2$; hence any element in the
kernel of $\phi^*:H^*(BG;\mathbb F_p)\to H^*(B(2,G);\mathbb F_p)$
must be nilpotent, as it is necessarily in the kernel of the map
$Q$.

The statement about the spectral sequence is routine and the
identification with the bar spectral sequence in the classical case
is well--known (see \cite{mccleary}).
\end{proof}

Instead of considering the descending central series of a group, it
is also possible to consider its \textsl{$p$--descending central
series}, defined as follows:

\begin{defin}
Let $p$ denote a prime number and $Q$ a group; define a chain of
subgroups $\Gamma^r_p(Q)$ inductively: $\Gamma^1_p(Q)=Q$;
$\Gamma^{i+1}_p(Q)=[\Gamma^i_p(Q), Q](\Gamma^i_p(Q))^p$. By
convention, $\Gamma^{\infty}_p(Q)=\{1\}$. The $p$--descending
central series of $Q$ is the normal series
$$\dots \subset \Gamma^{i+1}_p(Q)\subset
\Gamma^i_p(Q)\subset\dots\subset \Gamma^2_p(Q)\subset
\Gamma^1_p(Q)=Q.$$
\end{defin}

A discrete group $Q$ is said to be $p$--nilpotent of there is some
integer $m$ such that $\Gamma^{m+1}_p(Q)=\{1\}$. The least such
integer is called the $p$--class of $Q$. Given the free group $\FF_n$
on $n$ letters, denote by $\Gamma^q_p$ the $q$--th stage of the
descending central series for $\FF_n$.
There are some important differences here, in particular
$\Gamma^i_p/\Gamma^{i+1}_p$ is an elementary abelian $p$--subgroup
and the quotient groups $\FF_n/\Gamma^i_p$ are all finite $p$--groups.
Just as before, they can be assembled to yield simplicial spaces
$E_*(q,G,p)$ and $B_*(q,G,p)$ where $G$ is a topological group. The
basic fact used is that if $f:A\to B$ is a group homomorphism then
$f(\Gamma^i_p(A))\subset \Gamma^i_p(B)$. We can take their geometric
realization and thus obtain spaces $E(q,G,p)$ and $B(q,G,p)$; they
will now exhibit properties specific to the prime $p$. For example
$B(2,G,p)$ is a space assembled from the $p$--elementary abelian
subgroups in $G$. Here is an example of how these spaces capture
$p$--local information.

\begin{prop}
Let $G$ denote a finite group such that its mod $p$ cohomology is
detected on $p$--elementary abelian subgroups; then the map
$B(2,G,p)\to BG$ induces an injective map in mod $p$ cohomology.
\end{prop}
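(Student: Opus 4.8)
The plan is to mimic the argument for part (1) of Proposition~\ref{prop:general.cohomological.properties}, replacing the ordinary descending central series by its $p$--local analogue and then invoking the detection hypothesis directly. The first step is to identify the relevant quotient of the free group. Since $\Gamma^2_p(F_n)=[F_n,F_n](F_n)^p$, one has $F_n/\Gamma^2_p\cong(\Z/p)^n$, so that $Hom(F_n/\Gamma^2_p,G)=Hom((\Z/p)^n,G)$ is the space of $n$--tuples of commuting elements of order dividing $p$. Thus $B(2,G,p)$ is assembled from the $p$--elementary abelian subgroups of $G$. The same computation applied to an elementary abelian $p$--group $E$ gives $Hom((\Z/p)^n,E)=E^n$ for all $n$, whence $B(2,E,p)=BE$; this is the $p$--local counterpart of part (1) of Proposition~\ref{prop:general.properties}, since $p$--class less than $2$ means precisely elementary abelian.

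Next I would assemble the restriction maps. For each $p$--elementary abelian subgroup $E\le G$ the inclusion $\iota\colon E\hookrightarrow G$ is functorial, so by the $p$--local version of part (3) of Proposition~\ref{prop:general.properties} there is a commutative diagram in which $BE=B(2,E,p)\to B(2,G,p)$ composes with the natural map $B(2,G,p)\to BG$ to give the standard map $BE\to BG$. Passing to mod $p$ cohomology, the ordinary restriction $\mathrm{res}^G_E\colon H^*(BG;\mathbb F_p)\to H^*(BE;\mathbb F_p)$ therefore factors as
$$H^*(BG;\mathbb F_p)\xrightarrow{\ \phi^*\ }H^*(B(2,G,p);\mathbb F_p)\longrightarrow H^*(BE;\mathbb F_p).$$

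Finally I would invoke detection. The hypothesis that the mod $p$ cohomology of $G$ is detected on $p$--elementary abelian subgroups means precisely that the product of restrictions $H^*(BG;\mathbb F_p)\to\prod_E H^*(BE;\mathbb F_p)$ is injective. If $x\in\ker\phi^*$, then by the factorization above $\mathrm{res}^G_E(x)=0$ for every such $E$, forcing $x=0$. Hence $\phi^*$ is injective, as claimed.

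The main obstacle I expect is not the cohomological bookkeeping but verifying the two structural inputs for the $p$--descending central series construction: that $B(2,E,p)=BE$ for $E$ elementary abelian, and that the inclusion--induced maps are natural and compatible with the bundle projections to $BG$. These are the $p$--local analogues of the formal properties established in Proposition~\ref{prop:general.properties}; once one confirms that the functor $G\mapsto B(q,G,p)$ enjoys these same properties (as was asserted in the paragraph introducing it, using $f(\Gamma^i_p(A))\subseteq\Gamma^i_p(B)$), the remainder of the argument is a direct transcription of the detection argument above.
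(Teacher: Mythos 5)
Your proposal is correct and is exactly the argument the paper has in mind: the paper's proof is the one-line remark that "the proof follows from restricting to elementary abelian subgroups," and your write-up simply fills in the details — $B(2,E,p)=BE$ for $E$ elementary abelian, each restriction $H^*(BG;\mathbb F_p)\to H^*(BE;\mathbb F_p)$ factors through $H^*(B(2,G,p);\mathbb F_p)$ by naturality, and the detection hypothesis then forces $\ker\phi^*=0$. No gaps; this matches the paper's approach.
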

The proof follows from restricting to elementary abelian subgroups.
From the inclusions
$$B(2,G,p)\subset B(2,G)\subset BG$$
it follows that detecting on $B(2,G,p)$ is sharper than doing it with
$B(2,G)$.

\begin{exm}
Let $G=\Sigma_n$, the symmetric group on $n$ letters. Then it is
well known (see \cite{AM}, Ch. VI) that its mod $2$ cohomology is
detected on elementary abelian $2$--groups. Thus it follows that for
all $n\ge 1$
$H^*(\Sigma_n; \mathbb F_2)
\hookrightarrow H^*(B(2,\Sigma_n,2);\mathbb F_2)$ is a
monomorphism.
\end{exm}

\begin{rem}
Other variations of these constructions can be obtained by using
the free pro--$p$--groups or the profinite completions of the
free groups. Likewise taking quotients under the natural
conjugation action produces a simplicial space with terms
of the form $Rep(\mathbb Z^n, G)$. These constructions 
have interesting properties but will be considered
elsewhere.
\end{rem}

\section{Homotopy Properties of $B(q,G)$ for $G$ a Finite Group}\label{sec:homotopy}

This section primarily addresses the case when $G$ is a finite
group.

\begin{defin}
For a finite group $G$, let
$\mathcal{N}_q(G) = \{ A\subset G ~\rm{subgroup}
~|~ \Gamma^{q}(A)=\{1\}\}$
\end{defin}

The elements of $\mathcal{N}_q(G)$ are precisely all the subgroups
of nilpotence class less than $q$. Note that this is a partially
ordered set under inclusion, furthermore it is closed under
conjugation by elements in $G$ and under the process of taking
subgroups (indeed if $H\subset K$ is a subgroup, then
$\Gamma^i(H)\subset \Gamma^i(K)$ for all $i\ge 1$). Such a
collection is often referred to as a \textsl{family} of subgroups in
$G$. A group can be defined associated to these subgroups and the
inclusions between them.

\begin{defin}\label{colimit}
If $G$ is a finite group, let
$G(q)=\underset{A\in\mathcal{N}_q(G)}\colim A$.
\end{defin}

Note that
$\mathcal{N}_2(G)$ is the family of abelian subgroups in $G$ and
$G(2)$ is simply the colimit of all abelian subgroups in $G$. 
The classifying spaces of the groups $G(q)$ will play a role
in the analysis of the spaces $B(q,G)$.

\begin{thm}\label{B(q,G)colimit}
Let $G$ be a finite group, then for any $q\ge 2$,
$$B(q,G) = \underset{A\in \mathcal{N}_q(G)}\colim BA.$$
\end{thm}
\begin{proof}
For any subgroup $H\subset G$, there are natural inclusions
$$Hom(\FF_n/\Gamma^q, H)\subset Hom(\FF_n/\Gamma^q, G).$$ 
Given any
homomorphism $\psi: \FF_n \to G$ with $\psi (\Gamma^q)=1$, it follows
that $K=\rm{Im}~\psi$ belongs to $\mathcal{N}_q(G)$, and that
$\psi\in Hom(\FF_n/\Gamma^q, \rm{Im}~\psi)$. Now if $A\in
\mathcal{N}_q(G)$, then $Hom(\FF_n/\Gamma^q, A) = Hom (\FF_n, A)$ for
all $n\ge 0$; combining these facts yields $B_*(q,G)
=\bigcup_{A\in\mathcal{N}_q(G)} B_*(\infty, A)$. This simplicial
space is contained in $B_*(\infty, G)$, and the realizations $BA$
are natural subspaces of $BG$. If $A\subset A'$ then $BA\subset
BA'$; moreover $BA\cap BA' = B(A\cap A')$. Therefore the realization
$|\bigcup_{A\in\mathcal{N}_q(G)} B_*(\infty, A)|$ of the union is
precisely the space that results from identifying the realizations
$BA$ along their intersections i.e. the colimit, whence the result
follows.
\end{proof}

Note that the classifying
space functor does not in general commute with colimits.
In any case, since colimits are sometimes delicate to handle, 
it is preferable to
work with a \textsl{homotopy colimit} whenever possible (see
\cite{WZZ}). Under favorable circumstances they are weakly homotopy
equivalent. This will occur if the diagram of spaces is a
\textsl{free diagram} (see \cite{DF}); for a union this will hold if
all the intersections of spaces in the diagram are used in the
construction. This is automatically verified for the diagrams under
consideration here, because of the nice properties of the functor
$A\mapsto BA$ which have been outlined above.

Thus there is a weak equivalence $B(q,G) \simeq
\underset{A\in\mathcal{N}_q(G)}\hocolim~BA$.  
Now the identity element in $G$ makes all of the spaces in these diagrams
\textsl{pointed spaces} (i.e. the classifying space $B\{1\}$ is the
natural basepoint $*$). From this it follows that the fundamental
group of $B(q,G)$ can be computed as a colimit
$\pi_1 (B(q,G),*)\cong \underset{A\in \mathcal{N}_q(G)}\colim A\cong G(q)$.
This can be expressed more succinctly as follows. Let $\{M_i ~|~
i\in I\}$ denote a collection of maximal subgroups in
$\mathcal{N}_q(G)$. Then
$\pi_1(B(q,G),*)\cong *_{i\in I} M_i /(r_{ij})$
where the $r_{ij}$ are the relations coming from the inclusions
$M_i\cap M_j\subset M_i$.

Having established that $B(q,G)$ is a homotopy colimit, there are
some well-known methods from homotopy theory that can be used to
study its homotopy type. 

\begin{thm}\label{thm:hocolim.to.colim}
There is a natural fibration $B(q,G)\to BG(q)$ with fiber a
simply--connected finite dimensional complex $K_q$.
\end{thm}
\begin{proof}
If $A\in \mathcal{N}_q(G)$, then there is a natural inclusion
$A\subset G(q)$. This gives rise to a fibration $G(q)/A \to BA\to
BG(q)$, where the fiber is a discrete coset space. Taking homotopy
colimits and applying the results in \cite{P} (see \cite{DF}, page
180) yields a fibration

$$\underset{A\in \mathcal{N}_q(G)}\hocolim~ G(q)/A
\to \underset{A\in \mathcal{N}_q(G)}\hocolim~ BA \to BG(q)$$ and as
the fiber $K_q$ is a homotopy colimit of a finite collection of
discrete spaces it is necessarily finite dimensional. On the other
hand $BG(q)$ is a $K(\pi, 1)$ and the map $B(q,G)\to BG(q)$ induces
an isomorphism on fundamental groups, whence the result follows.
\end{proof}

A basic question is that of determining under what conditions the
fibre $K_q$ is contractible.

\medskip

\noindent\textbf{Question}: \emph{If $G$ is a finite group, are the
spaces $B(q,G)$ Eilenberg--Mac Lane spaces of type $K(G(q), 1)$?}

For any $q\ge 2$, the group $G(q)$ admits a natural surjection onto
$G$, as every element of $G$ is contained in an abelian subgroup,
and these groups all belong to $\mathcal{N}_q(G)$ for $q\ge 2$. i.e.
there is a group extension
$1\to T(q) \to G(q) \to G\to 1$.
The kernel is torsion--free by construction, as every finite group
in $G(q)$ necessarily embeds in $G$. Thus there are no exotic finite
groups appearing in the colimit. For all $A\in \mathcal{N}_q(G)$
the group $T(q)$ acts freely on
the coset space $G(q)/A$, with quotient precisely $G/A$. Applying
homotopy colimits, the fibrations $G(q)/A \to G/A \to BT(q)$ give
rise to a fibration $K_q\to
\underset{A\in\mathcal{N}_q(G)}\hocolim~ G/A\to BT(q)$. On the
other hand, as in the proof of Theorem \ref{thm:hocolim.to.colim}, 
the fibrations $G/A\to BA\to BG$ give rise to 
fibrations $\underset{A\in\mathcal{N}_q(G)}\hocolim~ G/A\to
B(q,G)\to BG$. Comparing this to the natural fibration $E(q,G)\to
B(q,G)\to BG$ it can be seen that there is an equivalence
$E(q,G)\simeq \underset{A\in\mathcal{N}_q(G)}\hocolim~ G/A$
and that $\pi_1(E(q,G))\cong T(q)$, which is a torsion--free group.
Note that the \textsl{hocolim} term is a finite complex. These
spaces can be assembled into a commutative diagram of fibrations:

\[
\xymatrix{
 K_q \ar@{=}[r] \ar[d] & K_q \ar[d]  \\
 E(q,G) \ar[r] \ar[d] & B(q,G) \ar[r] \ar[d] & BG
\ar@{=}[d]      \\
 BT(q) \ar[r] & BG(q)\ar[r]
 & BG  \\
}
\]
By the definition of $K_q$ in Theorem \ref{thm:hocolim.to.colim},
$K_q$ is contractible if and only if $B(q,G)\simeq BG(q)$; this
property is equivalent to $E(q,G)\simeq BT(q)$.

The complexes $K_q$ are defined as homotopy colimits
of functors with values in $G(q)$--spaces of the form
$G(q)/A$ where $A$ is a finite group, and so admit a 
natural $G(q)$ action with finite stabilizers
which restricts to a free action of the subgroup $T(q)$. The orbit
space $K_q/T(q)$ is a finite complex with a natural $G$--action, and
it is homotopy equivalent to $E(q,G)$. Now taking the quotient by
$G(q)$ yields the equivalence
$K_q/G(q)\simeq \underset{A\in\mathcal{N}_q(G)}\hocolim~ \{*\}
\simeq |\mathcal{N}_q(G)|\simeq~\{*\}$ due to the fact that the
nerve of the category is contractible, as it has a minimal element
(the trivial subgroup $\{1\}$). On the other hand, there is an
identification
$B(q,G)\simeq EG(q)\times_{G(q)}K_q$ which arises from the
equivalence
$$\underset{A\in\mathcal{N}_q(G)}\hocolim~EG(q)\times_{G(q)}G(q)/A
\simeq EG(q)\times_{G(q)}\underset{A\in\mathcal{N}_q(G)}\hocolim~G(q)/A.$$ 
Note that the projection map $B(q,G)\to |\mathcal{N}_q(G)|$ is a
rational equivalence (as the isotropy is finite) and hence it
follows that $H^i(B(q,G);\mathbb Q)\cong 0$ for $i>0$ (this
also follows from Theorem \ref{B(q,G)colimit}).

\begin{defin}
Let $G$ denote a finite group and let $\mathcal{P}_q(G)$ be the
category with objects the set
$\{ M_\alpha, M_\alpha\cap M_\beta ~|~ M_\alpha, M_\beta\in
\mathcal{M}_q(G)\}$ where $\mathcal{M}_q(G)$ denotes the set of
maximal subgroups in $G$ of nilpotence class $<q$, and the morphisms are the
set of inclusions of the form $M_\alpha\cap M_\beta \to M_\alpha. $
\end{defin}

This category is $1$-dimensional, as there are no compositions. This
can be identified with a \textsl{graph of groups}, that is, an
oriented graph with a group at each vertex and a homomorphism
between the two vertices of each edge according to orientation. Note
that in this situation the graph is always connected.

\begin{thm}\label{thm:aspherical B(q,G)}
Let $q\ge 2$ and $G$ a finite group such that $\mathcal{P}_q(G)$ is
a tree. Then the space $B(q,G)$ is aspherical and there is a natural
homotopy equivalence
$B(q,G)\simeq BG(q)$.
\end{thm}
\begin{proof}
Using the graph of groups $\mathcal{P}_q(G)$ a space $\mathbb
B\mathcal{P_q(G)}$ can be constructed by inserting a copy of the
classifying space of each group at each vertex of the graph and by
filling in a mapping cylinder for each map induced on classifying
spaces on each edge (in this case arising from subgroup inclusions).
The two ends of the mapping cylinder are identified with the
classifying spaces on the respective vertices. This is a special
case of a general construction for graphs of groups; in fact by
\cite{hatcher}, Theorem 1B.11 it follows that $\mathbb
B\mathcal{P}_q(G)$ is aspherical (the relevant hypothesis is the
injectivity of all the homomorphisms on edges).

By construction the maps between classifying spaces have been
replaced by cofibrations. Given that $\mathcal{P}_q(G)$ is a tree,
the space $\mathbb B\mathcal{P}_q(G)$ is homotopy equivalent to
their colimit (by collapsing the mapping cylinders) and this is
precisely $B(q,G)$.
\end{proof}

It is interesting to note that in the situation above, the colimit
group $G(q)$ can be readily understood using the theory of trees as
in \cite{Serre}; the special condition can be described as those
which determine a \emph{tree of groups}, and $G(q)$ is the
corresponding inductive limit associated to the tree.

\begin{exm}
For the symmetric group $\Sigma_4$ 
the diagram of abelian subgroups
is quite intricate although it can be shown that in fact
$B(2,\Sigma_4)$ is aspherical. On the other hand the maximal
subgroups of class less than $3$ in $\Sigma_4$ are isomorphic to
either $D_8$ or $\mathbb Z/3$, and every pair of distinct copies of
$D_8$ intersect along the subgroup
$$K= \{1, (12)(34), (13)(24), (23)(14)\}\cong
\mathbb Z/2\times\mathbb Z/2.$$ Furthermore,
$\mathcal{N}_q(\Sigma_4)=\mathcal{N}_3(\Sigma_4)$ for all $q\ge 3$,
hence it follows from the theorem above that
$$B(q,\Sigma_4)\simeq \bigvee^4 B\mathbb Z/3\vee
B(*_K^3 D_8)\simeq B\underset{A\in\mathcal{N}_q(\Sigma_4)}\colim~A$$
\end{exm}

\begin{rem}
The spaces $B(q,G,p)$ can also be analyzed using the methods
discussed in this section. In particular it is not hard to show that
$B(q,G,p) =
\underset{K\in\mathcal{S}_p(G)}\colim~ B(q,K,p)$ where
$\mathcal{S}_p(G)$ is the poset of $p$--subgroups in $G$. This in
turn can be further decomposed using the the classifying spaces of
$p$--subgroups which are of $p$--nilpotence class less than $q$.
\end{rem}

\section{Stable Splittings and a Counting Formula for Homomorphisms}
\label{sec:counting}

A stable splitting was given for the space of commuting $n$--tuples
in \cite{AC}. Analogous stable splittings arise for
$Hom(\FF_n/\Gamma^q,G)$ from the fat wedge filtration of the product
$G^n$ where the base-point of $G$ is $1_G$.

\begin{defin}\label{defin:simplicial.spaces}
As before, let $S_{n}(j,q,G)$ denote the subspace consisting of
elements in  
$Hom(\FF_n/\Gamma^q,G)$ with at least
$j$ coordinates equal to $1_G$.
Let $S_n(q,G)$ denote $S_n(1,q,G)$.
A Lie  group $G$ is said have \textsl{cofibrantly filtered elements}
if the natural inclusions $I_j: S_{n}(j,q,G)\to S_{n}(j-1,q,G)$ are
cofibrations for all $n$, $q$ and $j$ for which both spaces are
non--empty.
\end{defin}

\noindent It would seem that many Lie groups $G$ should have
cofibrantly filtered elements; it is plausible to conjecture that
this holds if $G$ is a closed subgroup of $GL(n,\mathbb C)$. Note
that in \cite{AC} the weaker condition of having cofibrantly
commuting elements (i.e. the special case $q=2$) was indeed verified
for these groups. The following result describes the stable
structure of the spaces of homomorphisms $Hom(\FF_n/\Gamma^q,G)$ in
terms of more recognizable pieces.

\begin{prop} \label{prop:stable.decompositions.for.general.G}
If $G$ has cofibrantly filtered elements, then there are homotopy
equivalences
$$\Sigma  Hom(\FF_n/\Gamma^q,G)\to
\bigvee_{1 \leq k \leq n}\Sigma  \bigvee^{\binom n k}
Hom(\FF_k/\Gamma^q,G)/ S_k(q,G)$$
and the natural filtration
quotients 
$$E^0_k(B(q,G))= F_kB(q,G)/F_{k-1}B(q,G)$$ 
of the geometric
realization $B(q,G)$ are stably homotopy equivalent to the summands
$$Hom(\FF_k/\Gamma^q,G)/S_k(q,G).$$
\end{prop}

This result is a special case of very general
splitting for simplicial spaces $X_*$ which are \textsl{proper} and
\textsl{simplicially NDR}. Details of this appear in \cite{ABBCG};
the main work is to verify these conditions for the simplicial
spaces $B_*(q,G)$.

In the special case when $G$ is a finite
group, the spaces
of homomorphisms are finite sets, and the cofibrantly filtered
condition is easily verified. The decomposition above can
be interpreted as a numerical formula. The following definition will
be helpful for keeping track of the numbers which will appear in 
counting the cardinality of these spaces of homomorphisms.

\begin{defin}\label{defin:sizes.of.hom.F.mod.gamma.q}
Let $G$ be a finite group; the integer $\lambda_n(q,G)$ is
defined as the cardinality of $Hom(\FF_n/\Gamma^q,G)$, and the integer
$\mu_k(q,G)$ is defined as the rank of $H_0(E^0_k(B(q,G);\mathbb
Z)$.
\end{defin}
\noindent An immediate consequence of Theorem
\ref{prop:stable.decompositions.for.general.G} is the formula
\begin{cor} \label{cor:cardinality.of.Hom.F.mod.Gamma.q}
If $G$ is a finite group, then $$\lambda_n(q,G) = 1 + \sum_{1 \leq k
\leq n} {\binom n k} \mu_k(q,G).$$
\end{cor}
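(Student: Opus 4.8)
The plan is to extract a numerical identity from the stable homotopy equivalence established in Theorem \ref{thm:G.discrete.implies.cofibrantly.filtered.elements}. Since $G$ is finite and discrete, every space $Hom(F_n/\Gamma^q,G)$ is a finite set; its cardinality $\lambda_n(q,G)$ equals the rank of its reduced homology plus one (coming from the basepoint component in degree zero), because the homology of a finite discrete set is concentrated in degree $0$ and is free of rank equal to the number of points. The strategy is therefore to compute the total reduced homology rank of $Hom(F_n/\Gamma^q,G)$ by passing through the stable splitting, where the summands are explicitly the filtration quotients $E^0_k(B(q,G))$.

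First I would recall that the stable equivalence gives
$$ Hom(F_n/\Gamma^q,G)\simeq \bigvee_{1 \leq k \leq n} \bigvee^{\binom n k}\Sigma^{-k}(E^0_k(B(q,G))),$$
so that reduced homology is additive over the wedge and the desuspension $\Sigma^{-k}$ shifts degrees down by $k$. Thus the total rank of $\widetilde H_*(Hom(F_n/\Gamma^q,G);\mathbb Z)$ equals $\sum_{1\le k\le n}\binom{n}{k}\,(\text{total rank of }\widetilde H_*(E^0_k(B(q,G));\mathbb Z))$. The next step is to observe that each filtration quotient $E^0_k(B(q,G))$ is, by Proposition \ref{prop:quotients.in.realization}, a $k$-fold suspension $\Sigma^k(Hom(F_k/\Gamma^q,G)/S_k(q,G))$ of a space built from a finite set modulo a subset; hence its reduced homology is free abelian and concentrated in a single degree range whose total rank is exactly $\mu_k(q,G)$ as defined in Definition \ref{defin:sizes.of.hom.F.mod.gamma.q}. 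Matching up the definition of $\mu_k(q,G)$ as the rank of $H_k(E^0_k(B(q,G));\mathbb Z)$ with the single-degree concentration of the homology of the quotient of finite sets is the key bookkeeping point.

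Assembling these, the total rank of $\widetilde H_*(Hom(F_n/\Gamma^q,G);\mathbb Z)$ is $\sum_{1\le k\le n}\binom{n}{k}\mu_k(q,G)$. Since $Hom(F_n/\Gamma^q,G)$ is a finite discrete space, this total reduced rank equals $\lambda_n(q,G)-1$, giving $\lambda_n(q,G)=1+\sum_{1\le k\le n}\binom{n}{k}\mu_k(q,G)$ as claimed. The main obstacle will be the careful accounting of degrees: one must confirm that for a finite discrete $G$ the relevant homology of each quotient piece really is concentrated so that $\mu_k(q,G)$ counts the \emph{full} reduced rank of $Hom(F_k/\Gamma^q,G)/S_k(q,G)$ rather than just one graded piece, and that the desuspensions do not introduce sign or degree-shift subtleties in the rank count. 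For finite discrete $G$ this concentration is automatic because the spaces are finite sets, so the subtlety is purely verifying that $H_k(E^0_k(B(q,G);\mathbb Z))$ and $\widetilde H_0(Hom(F_k/\Gamma^q,G)/S_k(q,G))$ have equal rank, which follows directly from the $k$-fold suspension isomorphism in Proposition \ref{prop:quotients.in.realization}.
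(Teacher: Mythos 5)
Your proof is correct and follows essentially the same route as the paper: the corollary is stated there as an immediate consequence of Theorem \ref{thm:G.discrete.implies.cofibrantly.filtered.elements}, and the intended argument is precisely your counting, namely that the cardinality of the finite pointed set $Hom(F_n/\Gamma^q,G)$ is one plus its total reduced homology rank, which the stable wedge decomposition distributes as $\binom{n}{k}$ copies of $\mu_k(q,G)$ since each $E^0_k(B(q,G))$ is a $k$-fold suspension of a finite set quotient with homology concentrated in degree $k$. Your attention to the degree bookkeeping matches the paper's own remark (in the abelian example) that the $k$-th homology of a $k$-fold suspension of a finite set has rank one less than its cardinality.
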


The special case $q=2$ is especially interesting as the formula
above provides information on the cardinality of the set of
commuting elements in a finite group.

\begin{exm}
If $A$ denotes a finite abelian group, then this formula
reduces to
$$|A|^n = 1 + \sum_{1 \leq k
\leq n} {\binom n k} (|A|-1)^k.$$
\end{exm}

\begin{exm}\label{exm:TC.trivial.center.abelian.p.sylow.rank.of.commuting.n.tuples}
Consider the case when $G$ is a finite transitively commutative
group (see Section \ref{sec:TC.groups} for their properties) 
with trivial center. 
The space $B_*(2,G)$ is the
one--point union of the simplicial spaces $B_*C_G(a_i)$, $1\le i\le
N$, where each $C_G(a_i)$ is a maximal abelian subgroup and so in
this case
$\mu_k(2,G)=\sum_{1\le i\le N}(|C_G(a_i)|-1)^k$
and
$$\lambda_n(2,G)= 1 + \sum_{1 \leq k
\leq n} {\binom n k}\sum_{1\le i\le N}(|C_G(a_i)|-1)^k.$$
This applies to $G=A_5$, the alternating group; there are three
isomorphism classes of centralizers: $\mathbb Z/2\times\mathbb Z/2$
(five copies), $\mathbb Z/3$ (ten copies) and $\mathbb Z/5$ (six
copies). This yields
$$\lambda_n(2,A_5) =1 + \sum_{1 \leq k
\leq n} {\binom n k} [5\cdot 3^k+10\cdot 2^k + 6\cdot 4^k]$$
\end{exm}

\section{The Spaces $B(q,G)$ for Connected Lie Groups}\label{sec:connected.Lie}

In this section basic properties of the spaces $B(q,G)$ will be
analyzed for connected Lie groups, including a
calculation of the rational cohomology of $B(2,G)$ when $G$ is
compact.

For a compact, connected Lie group $G$, consider the map
$$\phi_n:G/T\times T^n\to Hom(\Z^n,G)$$
given by
$$(gT,t_1,\ldots,t_n)\mapsto(gt_1 g^{-1},\ldots,gt_n g^{-1}).$$
As described in \S 2, an element in
$Hom(\Z^n,G)$ is represented as an ordered $n$--tuple of
commuting elements in $G$.
Notice that the Weyl group $W(G)=N(T)/T$ acts freely on $G/T\times
T^n$ by
$$(gT,t_1,\ldots,t_n)\cdot w=(gwT,w^{-1}t_1w,\ldots,w^{-1}t_nw)$$ 
and
that the map $\phi_n$ is invariant under this action. This yields
the following commutative diagram
$$\xymatrix{
G/T\times T^n\ar[r]^{\phi_n}\ar[d] & Hom(\Z^n,G) \\
(G/T\times T^n)/W(G)\ar[ru]^{\overline{\phi}_n} & }$$ Note that the
vertical map is equivalent to the map $(G\times T^n)/T\to (G\times
T^n)/N(T)$. Assume that the space $Hom (\Z^n, G)$ is
path--connected, then as was shown in \cite{baird}, $\phi_n$ is
surjective and the fibers of the induced map
$\overline{\phi_n}: (G\times T^n)/N(T)\to Hom (\Z^n,G)$
are rationally acyclic. Hence applying the Vietoris-Begle Theorem it
follows that $\overline{\phi_n}$ induces a rational homology
equivalence and thus the map $\phi_n$ induces an isomorphism
$H^*(Hom(\Z^n,G);\Q)\to H^*(G/T\times
T^n;\Q)^{W(G)}$. This information can be assembled to yield the
following theorem.

\begin{thm}\label{thm:cohomology.Lie} 
Let $G$ denote a compact, connected Lie group
with maximal torus $T\subset G$. Assume that the spaces $Hom(\Z^n,
G)$ are all path connected. There is an isomorphism
$$H^*(B(2,G);\Q)\cong H^*(G/T\times BT;\Q)^{W(G)}$$
which is compatible with the well-known isomorphism $H^*(BG;\Q)\cong
H^*(BT;\Q)^{W(G)}$, where $W(G)$ denotes the Weyl group of $T$ in
$G$.
\end{thm}
\begin{proof}
Identify the space $G/T\times T^n$ with the product $G/T\times
B_n(\infty,T)$, where $G/T$ is seen as a constant simplicial space.
Thus the maps $\phi_{n}$ define a simplicial map $\phi$, the Weyl
group $W(G)$ acts simplicially on $G/T\times B_*(\infty, T)$ and
$\phi$ is invariant under this action. This yields the following
commutative diagram of simplicial spaces
$$\xymatrix{
G/T\times B_*(\infty, T)\ar[r]^\phi\ar[d] & B_*(2,G) \\
(G/T\times B_*(\infty, T))/W(G)\ar[ru]^{\overline{\phi}} & }.$$ The
simplicial spaces under consideration are all proper (note that this
was proved in \cite{AC} for $B_*(2,G)$) and so by a standard result
(see \cite{may2}) the rational cohomology isomorphisms induced by
the maps $\overline{\phi}_n$ induce an isomorphism between the
rational cohomology of the geometric realizations, yielding the
desired equivalence. The last assertion follows from the diagram
$$\xymatrix{
G/T\times BT\ar[r]^\phi\ar[d]^{proj} & B(2,G)\ar[d]^{inc} \\
BT\ar[r]^{inc} & BG }$$ which is commutative up to homotopy.
\end{proof}

\begin{rem}
If the spaces $Hom(\Z^n,G)$ are not path--connected, an analogous
result can be obtained using the generic component $Hom_0(\Z^n,G)$
i.e. the component of the trivial representation. This is indeed a
simplicial subspace, as the face and degeneracy maps are continuous
and preserve the trivial representation. Its realization gives rise
to a subspace $B(2,G)_0\subset B(2,G)$ for which the rational
cohomology can be computed as above.
\end{rem}

\begin{exm}\label{exm:general.properties}
In case $G=U(n)$, it was shown in \cite{AC} that the spaces of
ordered commuting $n$--tuples are all path--connected. Hence the
theorem applies here. Note that in this case the Weyl group is the
symmetric group $S_n$ which acts by permuting the entries in the
diagonal maximal torus. Recall that
$H^*(BT)\cong \Q [t_1,\dots , t_n]$,
a polynomial ring on $n$ two dimensional generators. Thus the
rational cohomology of $B(2,U(n))$ can be described as the ring of
invariants
$H^*(B(2,U(n));\Q)\cong (H^*(U(n)/T)\otimes
             \Q [t_1,\dots , t_n])^{S_n}$.
Note that the natural inclusion
$B(2,U(n))\to BU(n)$
has a stable section, as this is true for the map $BT\to BU(n)$, where $T\subset U(n)$ is a maximal torus
\cite{Snaith}. Thus $BU(n)$ is a stable retract of $B(q,U(n))$ for
all $q \geq 2$.
\end{exm}

\begin{rem}
The spaces $Hom(\mathbb Z^n,G)$ admit an action of $G$ by
conjugation, and the orbit space $Rep (\mathbb Z^n, G)$ can be
identified with the moduli space of flat $G$--bundles over the torus
$T^n$, an object of considerable geometric interest. The
structure of these spaces in the case of $G = U(n), SU(n), Sp(n)$ is
given in \cite{adem-cohen-gomez}.
\end{rem}

The next result in this section is a product decomposition arising
from looping the fibration $E(q,G) \to B(q,G) \to BG$. First
recall that there is a map $\iota_G:\Sigma(G) \to BG$ which is
natural for morphisms of topological groups obtained by identifying
the first filtration $F_1BG$ as $\Sigma(G)$. In addition, the
composite
\[
\begin{CD}
G  @>{E}>> \Omega \Sigma(G)@>{\Omega(\iota_G)}>>   \Omega BG
\end{CD}
\] is a homotopy equivalence where $E$ is the standard Freudenthal
suspension map and $G$ is a Lie group.

\begin{thm}\label{thm:product.decomposition}
If $G$ is a connected Lie group, then the associated looped
fibration
$$\Omega(E(q,G)) \to \Omega(B(q,G)) \to \Omega(BG)$$ has a
cross-section (up to homotopy). The cross-section is induced by a
map $\sigma(q,G): G \to \Omega(B(q,G))$ given by the composite

\[
\begin{CD}
G @>{E}>>  \Omega(\Sigma(G))  @>{\Omega(\iota(q,G))}>>
\Omega(B(q,G))
\end{CD}
\] where $\iota(q,G):\Sigma(G) \to B(q,G)$ is given by
the canonical identification $\Sigma(G) = F_1B(q,G)$ for $q \geq 2$,
and which is natural for morphisms in $G$. Thus there is a homotopy
equivalence $\theta(q,G):G \times \Omega(E(q,G)) \to \Omega
B(q,G)$.

Furthermore the cross-section $\sigma(q,G)$ and homotopy equivalence
$\theta(q,G)$ are natural in the sense that if $f:G \to H$ is a
morphism of topological groups, then there are strictly commutative
diagrams

\[
\begin{CD}
G @>{\sigma(q,G)}>> \Omega B(q,G)  @>{}>> \Omega BG \\
@V{f}VV            @VV{\Omega B(q,f)}V    @VV{\Omega Bf}V  \\
H @>{\sigma(n,H)}>> \Omega B(q,H)  @>{}>> \Omega BH.
\end{CD}
\] 

\[
\begin{CD}
G \times \Omega(E(q,G)) @>{\theta(q,G)}>> \Omega B(q,G)  \\
@V{ f \times \Omega E(q,f)}VV            @VV{\Omega B(q,f)}V     \\
H \times \Omega(E(q,H)) @>{\theta(q,H)}>> \Omega B(q,H).
\end{CD}
\]

\end{thm}

\begin{proof}

Recall that there is a canonical identification $\Sigma(G) =
F_1B(q,G)$ for $q \geq 2$ natural for morphisms in $G$. Thus there
is a natural factorization
\[
\begin{CD}
\Sigma(G) @>{}>> F_1B(2,G)  @>{}>> \cdots @>{}>> F_1(BG).
\end{CD}
\] together with the associated composite arising by taking adjoints

\[
\begin{CD}
G @>{E}>> \Omega\Sigma(G) @>{}>> \Omega F_1B(2,G) @>{}>> \cdots @>{}>>
\Omega(F_1(BG))@>{}>> \Omega(BG).
\end{CD}
\] So the following diagram strictly commutes:

\[
\begin{CD}
G @>{E}>> \Omega\Sigma(G) @>{}>> \Omega F_1B(2,G)@>{}>>
\cdots @>{}>> \Omega(F_1(BG))@>{}>> \Omega(BG)\\
@V{1}VV          @V{1}VV @VVV   @VVV      @VVV       @VV{}V \\
G @>{E}>> \Omega\Sigma(G) @>{}>> \Omega B(2,G)@>{}>> \cdots @>{}>>
\Omega(B(q,G))@>{}>> \Omega(BG)
\end{CD}
\]

\

The composite

\[
\begin{CD}
G @>{E}>> \Omega\Sigma(G) @>{}>> \Omega B(2,G) @>{}>> \cdots @>{}>>
\Omega(B(q,G))
\end{CD}
\] is the adjoint $\iota(q,G):\Sigma(G) \to B(q,G)$ and thus the induced map

\[
\begin{CD}
G @>{E}>> \Omega\Sigma(G) @>{}>> \Omega(B(q,G)) @>{}>> \Omega(BG)
\end{CD}
\] is a homotopy equivalence.

\

The looped fibration $\Omega(E(q,G)) \to \Omega(B(q,G)) \to
\Omega(BG)$ has a cross-section. The choice of homotopy equivalence
$\theta(q,G): G \times \Omega(E(q,G))\to \Omega(B(q,G))$ is the
composite
\[
\begin{CD}
G \times \Omega(E(q,G)) @>{\sigma(q,G) \times
\Omega(p(q,G))}>>\Omega(B(q,G)) \times \Omega(B(q,G))
@>{\hbox{multiply}}>> \Omega(B(q,G)).
\end{CD}
\] Thus the total space $\Omega B(q,G)$ is homotopy equivalent to a
product $G \times \Omega(E(q,G))$.

\

To finish, it suffices to check naturality. Notice that
  the maps $\sigma(q,G): G \to \Omega B(q,G)$ are natural for
morphisms in $G$;
 the maps $\Omega(p(q,G)): \Omega(E(q,G)) \to \Omega(B(q,G))$ are
natural for morphisms in $G$; and
the composite
\[
\begin{CD}
G \times \Omega(E(q,G)) @>{\sigma(q,G) \times
\Omega(p(q,G))}>>\Omega(B(q,G)) \times \Omega(B(q,G))
@>{\hbox{multiply}}>> \Omega(B(q,G))
\end{CD}
\] is natural for morphisms in $G$.
The theorem follows.
\end{proof}

\begin{rem}
The proof of Theorem \ref{thm:product.decomposition} does not 
require the hypothesis
that $G$ be connected. Namely, the fibration $B(q,G) \to BG$ is, 
\textbf{after looping}, 
always split for any $G$
(as seen in the proof above). Since a multiplicative fibration which is 
split is a trivial fibration, there is a homotopy
equivalence $G \times \Omega(E(q,G)) \simeq \Omega B(q,G)$.
This decomposition does not usually preserve the loop structure as the
following example shows. Consider
the group $G = A_5$: in this case, the space 
$\pi_0\Omega B(q,A_5) = \pi_1(B(2,A_5))$. On the other hand, there 
is an isomorphism of groups
$$\pi_0(A_5 \times \Omega(E(2,A_5)) = A_5 \times  F$$ 
where $F$ is a finitely generated free group of rank $854$.
Thus
$\pi_0(A_5 \times \Omega(E(2,A_5))$ and  $\pi_0\Omega B(q,A_5)$ 
fail to be isomorphic as groups, although they are isomorphic as sets.
\end{rem}

\section{The Homology of $B(q,G)$ when $G$ is Finite, and the 
Feit-Thompson Theorem}\label{sec:cohomology.Feit-Thompson}

The purpose of this section is to consider homological properties
of the spaces $E(q,G)$ and
$B(q,G)$ when $G$ is finite. In particular
the homology groups $H_*(E(q,G);\mathbb Z)$ are natural
$\mathbb Z G$--modules with potentially interesting properties.

If $K_*$ is a simplicial set, then define $\Z K_*$ as
the free abelian group on the simplices of $K_*$. The face maps of
$K_*$ define boundary maps by the equation
\[ \partial_n=\sum_{i=0}^n (-1)^i d_i :\Z K_n\to \Z K_{n-1}\]
and so $\Z K_*$ becomes a chain complex. The crux of this
construction are the isomorphisms
$H_*(\Z K_*)\cong H_*(Sing(|K_*|))\cong H_*(|K_*|)$
where $Sing(|K_*|)$ is the singular complex of $|K_*|$. Consider $\Z
B_*(q,G)$, which can be thought of as a subcomplex of $\Z
B_*(\infty,G)$; in fact
$\Z B_*(q,G)=\bigcup_{H\in \mathcal{N}_q(G)} \Z B_*(\infty,H)$.

Note that $\Z B_0(q,G)=\Z$ and $\Z B_1(q,G)=\Z[G]$ (the free abelian
group on $G$), so the chain complex looks like
\[\cdots\to \Z B_2(q,G)\stackrel{\partial_2}{\to} \Z[G]\stackrel{\partial_1}{\to} \Z\]
where $\partial_1=0$ and $\partial_2(x,y)=y-xy+x$. Define for each
$q\geq 2$ a subgroup of $\Z[G]$ by
\[I_q(G)=\langle y-xy+x| \Gamma^q(\langle x,y\rangle) =1, \mbox{ with } x,y\in G\rangle\]
Thus $I_2(G)\subseteq I_3(G)\subseteq\cdots\subseteq I_\infty(G)$,
where $\Gamma^q=\{ 1\}$ when $q=\infty$. Then
$H_1(B(q,G))=\Z[G]/I_q(G)$
and so there is a sequence of surjective maps
\[ H_1(B(2,G))\to H_1(B(3,G))\to\cdots\to H_1(BG)\]

\begin{cor}
If $G$ is a finite group and $q\geq 2$, then the $H_i(B(q,G);\Z)$
are finite abelian groups for all $i>0$, and their torsion only
occurs at primes dividing the order of $G$.
\end{cor}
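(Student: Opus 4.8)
The plan is to combine the homotopy colimit description $B(q,G)\simeq \hocolim_{A\in\mathcal{N}_q(G)} BA$ established above with a localization argument. Two features of this diagram are the key inputs. First, since $G$ is finite the indexing poset $\mathcal{N}_q(G)$ is \emph{finite} and its nerve is contractible, as it has the minimal element $\{1\}$. Second, for each subgroup $A\in\mathcal{N}_q(G)$ the reduced homology $\widetilde H_t(BA;\Z)=H_t(A;\Z)$ is a \emph{finite} abelian group for $t>0$, annihilated by $|A|$ and hence by $|G|$. The engine of the argument is the homology spectral sequence of the homotopy colimit,
$$E^2_{s,t}=\colim_{s}\,\big(A\mapsto H_t(BA;R)\big)\Longrightarrow H_{s+t}(B(q,G);R),$$
where $\colim_s$ denotes the $s$-th derived functor of $\colim$ over $\mathcal{N}_q(G)$, equivalently the homology of the poset with the indicated coefficient system. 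Because $\mathcal{N}_q(G)$ is a finite poset its nerve is finite dimensional, so the index $s$ is bounded and for each total degree $n$ only finitely many terms $E^2_{s,t}$ with $s+t=n$ are nonzero.

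First I would run this spectral sequence with $R=\Z$ to obtain finite generation. Each coefficient group $H_t(BA;\Z)$ is finitely generated, the derived colimit of a finitely generated coefficient system over a finite category is again finitely generated, and by the boundedness just noted $H_n(B(q,G);\Z)$ receives contributions from only finitely many such terms. Hence $H_n(B(q,G);\Z)$ is finitely generated for every $n$.

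Next I would run the same spectral sequence with $R=\Z[1/|G|]$. Since $\widetilde H_t(BA;\Z)$ is finite and annihilated by $|G|$ for $t>0$, inverting $|G|$ forces $H_t(BA;\Z[1/|G|])=0$ for $t>0$, while $H_0(BA;\Z[1/|G|])=\Z[1/|G|]$ is the constant coefficient system. The spectral sequence is therefore concentrated in the row $t=0$, where it computes the homology of the nerve, $E^2_{s,0}=H_s(|\mathcal{N}_q(G)|;\Z[1/|G|])$. As the nerve is contractible this vanishes for $s>0$, giving $H_i(B(q,G);\Z[1/|G|])=0$ for all $i>0$; this refines the rational acyclicity already recorded above. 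Finally, since $\Z[1/|G|]$ is flat over $\Z$, the universal coefficient theorem yields $H_i(B(q,G);\Z)\otimes\Z[1/|G|]=0$ for $i>0$. A finitely generated abelian group that vanishes after inverting $|G|$ must be finite with torsion supported at primes dividing $|G|$, which is exactly the assertion.

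The main obstacle is the homotopy-theoretic input: correctly setting up the homology spectral sequence of the homotopy colimit, identifying its $E^2$-term with derived colimits over $\mathcal{N}_q(G)$, and verifying that inverting $|G|$ collapses it onto the contractible nerve. Once this framework is in place, the surrounding algebra—finite generation of derived colimits over a finite category and the elementary structure statement for finitely generated abelian groups—is routine.
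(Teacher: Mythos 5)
Your proposal is correct and follows essentially the same route as the paper: both decompose $B(q,G)$ over the family $\mathcal{N}_q(G)$ (the paper as a chain-level union of the subcomplexes $\Z B_*(\infty,A)$ analyzed by a Mayer--Vietoris spectral sequence, you as a homotopy colimit analyzed by the Bousfield--Kan spectral sequence), and both conclude from the key fact that the reduced integral homology of a finite subgroup $A$ is annihilated by $|A|$, hence by $|G|$. Your packaging via $\Z[1/|G|]$-coefficients, the contractibility of the nerve, and finite generation plus universal coefficients is a more careful rendering of the same argument, making explicit the two points the paper's terse proof leaves implicit: why the $t=0$ row contributes nothing in positive total degree, and why the groups $H_i(B(q,G);\Z)$ are finite rather than merely $|G|$-power torsion.
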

\begin{proof}
As observed previously, $\Z B_*(q,G)$ can be thought of as the union
of the chain complexes generated by the maximal subgroups of class
$<q$. Therefore a Mayer-Vietoris spectral sequence can be used to
compute $H_*(B(q,G);\mathbb Z)$, involving the homology of finite
subgroups of $G$; hence their reduced homology is annihilated by
$|G|$. Thus the only torsion involved is at the primes dividing
$|G|$.
\end{proof}

Similarly one can consider $\Z E_*(q,G)$; in this case $\Z
E_0(q,G)=\Z$, and the complex looks like
\[ \cdots\to \Z E_2(q,G)\stackrel{\partial_2}{\to} \Z E_1(q,G)\stackrel{\partial_1}{\to} \Z[G]\]
where $\partial_1(a,x)=ax-a$ and
$\partial_2(a,x,y)=(ax,y)-(a,xy)+(a,x)$. As the natural projection
$E_*(q,G)\to B_*(q,G)$ is a simplicial map this yields a map of
chain complexes and hence one of homology groups. Note that this map
on $H_1$ takes the form
\[ H_1(E(q,G))\to H_1(BG),\,\,\,\,
(z,x)\mapsto x\]
at the chain level. As before, there is a sequence of surjective
maps \[ H_1(E(2,G))\to H_1(E(3,G))\to\cdots\to H_1(EG).\] The
following result shows that the first homology group contains
interesting information.

As has been discussed previously, there is a principal $G$-bundle
$E(q,G) \to B(q,G)$ which in the case of a discrete group gives rise
to a $G$--covering.
One question which arises is to analyze the structure of the
exact sequence of fundamental groups $1 \to \pi_1(E(q,G)) \to
\pi_1(B(q,G)) \to G \to 1$. In particular, this sequence gives a
natural representation $\rho: G \to Out(\pi_1(E(q,G))$; a pertinent
question here would be to ask for natural properties of this
representation. Passing to homology, the Serre spectral sequence
associated to the fibration $E(q,G)\to B(q,G)\to BG$ gives
rise to an exact sequence connecting the low dimensional
homology of these groups. Interestingly, this can be used to reformulate
the Feit--Thompson Theorem, which states that
every finite group of odd order is solvable.

\begin{prop}\label{prop:on.feit.thompson}
The Feit--Thompson Theorem is equivalent to the following result:
for $G$ a finite group of odd order, the homomorphism
$H_1(E(2,G);\mathbb Z)\to H_1(B(2,G);\mathbb Z)$
is not surjective.
\end{prop}

\begin{proof}
Consider the fibration
$E(2,G)\to B(2,G)\to BG$
and its associated 5--term exact sequence in homology; this yields
an exact sequence of the form
$$H_2(B(2,G); \mathbb Z)\to H_2(BG;\mathbb Z) \to H_1(E(2,G);\mathbb Z)_G\to H_1(B(2,G);\mathbb Z)\to
G/[G,G]\to 0.$$ The Feit--Thompson Theorem says that every odd order
group is solvable, which is equivalent to the condition that
$G/[G,G]\ne 1$ for all $G$ of odd order. This is precisely
equivalent to the failure of surjectivity for the map
$H_1(E(2,G);\Z)\to H_1(B(2,G);\Z)$.
\end{proof}

As noted previously $T(q)$ is the fundamental group of
$E(q,G)$, so $H_1(E(q,G);\mathbb Z)\cong T(q)/[T(q),T(q)]$. In the
sequel examples will be given where the geometry of the spaces
$E(q,G)$ will be explicitly determined (especially in the case
$q=2$) and the map on $H_1$ analyzed in some detail.

\section{$B(2,G)$ for Transitively Commutative Groups}\label{sec:TC.groups}

In this section the space $B(2,G)$ will be described for a very
particular class of finite groups, the \textsl{transitively
commutative groups}. 

\begin{defin} A transitively 
commutative\footnote{This definition appears in \cite{PY}, page 415. 
A related notion, that of 
a centralizer abelian or CA group,
refers to the situation when commutativity is transitive on all non--trivial
elements in the group.
Note that TC with trivial center is equivalent to non-abelian CA.}
or TC group $G$ is one satisfying
the following condition: 
given elements $g,h,k\in G-Z(G)$,
if $[g,h]=1=[h,k]$, then $[g,k]=1$.
\end{defin}

Note that the TC-groups are classified (see~\cite{Sc}, page 519 and Theorem 9.3.12); 
examples include abelian groups,
groups with an abelian normal subgroup of prime index, e.g. dihedral and generalized quaternion groups,
$SL(2,\F_{2^n})$, with $n\geq 2$, and 
all nonabelian groups of order $< 24$. The following elementary 
lemmas describe their structure (proofs are left to the reader).

\begin{lem}\label{TC}  Let $G$ be a nonabelian group; the following are all
equivalent
to $G$ being a TC group:
\begin{itemize}
\item[a)] If $g\notin Z(G)$, then $C(g)$ is abelian.
\item[b)] If $[g,h]=1$, then $C(g)=C(h)$ whenever $g,h\notin Z(G)$.
\item[c)] If $A,B\leq G$ and $Z(G)<C_G(A)\leq C_G(B)<G$, then $C_G(A)=C_G(B)$.
\end{itemize}
\end{lem}

\begin{lem}\label{TC2} Let $G$ be a TC group with trivial center.
\begin{itemize}
\item[a)] The Sylow subgroups of $G$ are abelian and intersect trivially.
\item[b)] $\{ C_G(x)|\ x\not\in Z(G)\}$ is the family of maximal abelian subgroups of $G$.
\item[c)] The maximal abelian subgroups of $G$ intersect trivially.
\item[d)] If $H$ is a maximal abelian subgroup and $P\in Syl_p(H)$, then $P\in Syl_p(G)$.
\end{itemize}
\end{lem}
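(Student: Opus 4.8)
The plan is to reduce every part to the defining dichotomy of a TC group recorded in Lemma~\ref{TC}: since $Z(G)=\{1\}$, condition (a) there says that $C_G(x)$ is abelian for every $x\neq 1$, and condition (b) says that $[x,y]=1$ forces $C_G(x)=C_G(y)$ whenever $x,y\neq 1$. Together with the elementary identity $C_G(x)\cap C_G(y)=C_G(\langle x,y\rangle)$ and two standard facts about finite groups — that a nontrivial $p$-group has nontrivial center, and that an abelian group has a unique Sylow $p$-subgroup — these suffice. I would establish part (b) first, since it identifies the maximal abelian subgroups concretely and makes the remaining parts nearly formal.

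For (b): if $x\neq 1$ then $C_G(x)$ is abelian and contains $x$, so any abelian subgroup $A\supseteq C_G(x)$ has every element commuting with $x$, forcing $A\subseteq C_G(x)$ and hence $A=C_G(x)$; thus each $C_G(x)$ is maximal abelian. Conversely, a maximal abelian subgroup $M$ is nontrivial (as $Z(G)=\{1\}$ and $G$ is nonabelian), so picking $1\neq x\in M$ gives $M\subseteq C_G(x)$ with $C_G(x)$ abelian, whence $M=C_G(x)$ by maximality. For (c): if two maximal abelian subgroups $C_G(x)$, $C_G(y)$ shared a nonidentity element $z$, then $x,y\in C_G(z)$, which is abelian, so $[x,y]=1$; condition (b) of Lemma~\ref{TC} then gives $C_G(x)=C_G(y)$, so distinct maximal abelian subgroups meet trivially.

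For (a): a Sylow $p$-subgroup $P$ has nontrivial center, so choosing $1\neq z\in Z(P)$ gives $P\subseteq C_G(z)$, which is abelian; hence $P$ is abelian. For the trivial intersection, a common nonidentity element $z$ of two Sylow $p$-subgroups $P,Q$ would place both inside the abelian group $C_G(z)$, whose unique Sylow $p$-subgroup then contains both; comparing orders against the $p$-part of $|G|$ forces $P=Q$. For distinct primes the intersection is trivial on order grounds.

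For (d), which I expect to be the main obstacle, write $H=C_G(x)$ using part (b) and let $P\in Syl_p(H)$ be nontrivial. The point is to show that extending $P$ to a Sylow $p$-subgroup $P^*$ of $G$ produces no new elements. Picking $1\neq z\in P$, the element $z$ commutes with $x$ (as $z\in H$) and satisfies $P^*\subseteq C_G(z)$ (since $P^*$ is abelian by part (a) and $z\in P^*$). As $C_G(z)$ is abelian and contains both $x$ and $P^*$, every element of $P^*$ commutes with $x$, i.e. $P^*\subseteq C_G(x)=H$; then $P^*$ is a $p$-subgroup of $H$ containing the Sylow subgroup $P$, forcing $P^*=P$ and hence $P\in Syl_p(G)$. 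The delicacy here is the double use of the trivial-center hypothesis — once to know $C_G(z)$ is abelian, and once to propagate commutativity from $z$ to $x$ across all of $P^*$ — together with the implicit assumption $p\mid|H|$ so that $P\neq 1$; I would note explicitly that the degenerate case $P=1$ is meaningful only when $p\nmid|G|$.
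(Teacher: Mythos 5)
Your proof is correct and follows essentially the same route as the paper's: part (a) via a nontrivial central element of a Sylow subgroup and abelianness of centralizers, parts (b) and (c) as near-formal consequences of Lemma~\ref{TC}, and part (d) by showing the ambient Sylow $p$-subgroup of $G$ containing $P$ is forced into $H$ by commutativity and maximality (the paper phrases this via $HQ$ being abelian, you via $P^*\subseteq C_G(x)=H$, which is the same argument). Your explicit remark about the degenerate case $P=1$ (i.e.\ $p\nmid|H|$) flags an implicit assumption the paper also makes by choosing $1\neq x\in P$, so it is a welcome clarification rather than a divergence.
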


Now let $a_1,\ldots,a_k\in G-Z(G)$ be a set of representatives of their
centralizers so that
$G=\bigcup_{1\leq i\leq k} C_G(a_i)$ and no smaller number of centralizers 
covers $G$. Note that
Lemma~\ref{TC} shows that the groups defining this union do not
depend on the choice of representatives. This number $k$ is called
the {\it number of centralizers that cover $G$}. Moreover, note that
each $C(a_i)$ is a maximal abelian subgroup of $G$ and that distinct
centralizers intersect along the center $Z(G)$.
Applying Theorem~\ref{thm:aspherical B(q,G)}
in this situation yields the following

\begin{prop}\label{main} If $G$ is a TC group, then $B(2,G)\simeq BG(2)$, where
$G(2)$ is defined in \ref{colimit} and as a consequence
of \ref{TC} is the amalgamated product of the maximal abelian subgroups
of $G$ along the center of $G$. In particular, $E(2,G)$ is a
$K(\pi,1)$ as well.
\end{prop}

\begin{cor}\label{cor:stable.splitting.for.TC} If $G$ is a $TC$ group with trivial center, then
$$B(2,G)\simeq \bigvee_{1\leq i\leq k} \left( \prod_{p\mid |C_G(a_i)|}  BP \right)$$
where $P\in Syl_p(G)$.
\end{cor}
\begin{proof}
$B_*(2,G)$ is the one-point union of the simplicial spaces
$B_*(2,C_G(a_i))$, and it's easy to see that
$C_G(a_i)\cong\prod_{p\mid |C_G(a_i)|}  P$
with $P\in Syl_p(G)$. The result follows.
\end{proof}

\begin{prop}\label{prop:stable.splitting.sylows.for.TC}
If $G$ is a finite $TC$ group with trivial center, then there is a
stable homotopy equivalence
$B(2,G)\simeq \bigvee_{p\mid |G|} \bigvee_{P\in Syl_p(G)} BP$.
\end{prop}

\begin{exm}
Let $G=SL_2(\mathbb F_8)$, this is a TC group of order $504=2^3\cdot
3^2\cdot 7$, with trivial center and with $p$--Sylow subgroups
$(\mathbb Z/2)^3$, $\mathbb Z/9$ and $\mathbb Z/7$. There is a stable
homotopy equivalence
$$B(2, SL_2(\mathbb F_8))\simeq
\bigvee^9 B(\mathbb Z/2)^3\bigvee[\bigvee^{28} B\mathbb Z/9]\bigvee
[\bigvee^{36} B\mathbb Z/7]$$
\end{exm}

Consider the structure of $E(2,G)$ for $G$ a finite $TC$ group. The
main ingredient which can be applied is the theory of trees, as
described in \cite{Serre}. In this situation, the colimit group
$G(2)=*_{Z(G)}C_G(a_i)$ acts on a graph $X$ by setting
$$\rm{Edges}(X)=\coprod^k G(2)/Z(G),\,\,\,\,
\rm{Vertices}(X)=G(2)/Z(G)\sqcup~ [\coprod_{1\leq i\leq k} G(2)/C(a_i)]$$
and where the $i^{th}$ copy of $G(2)/Z(G)$ is identified with the
vertices $G(2)/Z(G)\sqcup G(2)/C(a_i)$ by using the identity and the
natural maps $G(2)/Z(G)\to G(2)/C(a_i)$. It turns out that $X$ is a
tree (see~\cite{Serre} p.~38), with a simplicial action of $G(2)$
having as fundamental domain the \textsl{tree of groups} given by
\[\xymatrix{
C(a_1)\ar@{^{}}[rd] & \cdots & C(a_k)\ar@{^{}}[ld] \\
 & Z(G) & }\]

\begin{prop}\label{prop:TC.tree} If $G$ is a TC group then the fundamental group of $E(2,G)$ is a free group of rank
$$N_G=1-|G:Z(G)|+\left( \sum_{1\leq i\leq k} |G:Z(G)|-|G:C_G(a_i)|\right)$$
and so $E(2,G)\simeq \bigvee_{N_G} S^1.$
\end{prop}
\begin{proof}
Notice that the fundamental group of $E(2,G)$ is the kernel of the
homomorphism between $G(2)$ to $G$ induced by the natural
inclusions, and no element in $T(2)=\pi_1(E(2,G))$ is conjugate to
an element in a centralizer or else the image of this element is
nontrivial in $G$. Hence by a theorem due to Kurosh (see for instance Corollary A.2
from~\cite{Br} or~\cite{Serre} p.~56) it must be a free group. 
To find the rank it suffices to calculate the Euler
characteristic of the groups in the extension
$1\to T(2)\to G(2)\to G\to 1$
On the one hand $\chi (G(2)) = \frac{\chi (T(2))}{|G|}$ and so
$\rm{rank}~(T(2))=1-\chi(T(2))=1-\chi(G(2))|G|$. On the other hand,
since $G(2)$ is the amalgamated product of the centralizers covering
$G$ along $Z(G)$, it follows by an inductive argument that
\[ \chi(G(2))=\frac{1}{|Z(G)|}+\sum_{i=1}^{k} [\frac{1}{|C_G(a_i)|}-\frac{1}{|Z(G)|}]\]
\[rank~T(2)=1-\chi(G(2))|G|=
 1 - |G|\left( \frac{1}{|Z(G)|}+\sum_{1\leq i\leq k} [\frac{1}{|C_G(a_i)|}-\frac{1}{|Z(G)|}]\right) \]
and the result follows.
\end{proof}
Note that $Y= X/T(2)$ is a finite graph with a $G$--action. Looking
at the cellular chain complex of $Y$ yields the following exact
sequence of $\mathbb Z G$-modules
\[0\to H_1(E(2,G))\to \bigoplus^k\Z[G/Z(G)]\xrightarrow{\phi}\Z[G/Z(G)]\oplus
\bigoplus_{1\leq i\leq k}\Z[G/C(a_i)]\to \Z\to 0 \eqno (I)\] where
$\Z[G/H]$ denotes the usual permutation module with isotropy $H$,
and the structure of $H_1(E(2,G))$ as a $G$-module is the one
determined by the extension $$1\to \pi_1(E(2,G))\to G(2)\to G\to 1.$$
The map $\phi$ is described as follows; let $j_i:\mathbb Z[G/Z(G)]
\to \mathbb Z[G/C(a_i)]$ be the natural projection induced by the
inclusion of $Z(G)$ in $C(a_i)$ for $i=1,\dots , k$. Then
$$\phi (v_1, v_2,\dots , v_k) = (\sum_{i=1}^k v_i, -j_1(v_1),
\dots , -j_k(v_k))$$

There are two long exact sequences associated to this exact
sequence, obtained by applying $G$--hypercohomology to it (see
\cite{AM}, Chapter V). The first one is:

$$\dots \to H_i(G;\mathbb Z)\to H_{i-2}(G,H_1(E(2,G)))\to
H_{i-1}(B(2,G);\mathbb Z)\to H_{i-1}(G;\mathbb Z)\to\dots \eqno
(II)$$ 
The other one is of the form
$$\dots \to H_{q+1}(B(2,G))\to \bigoplus^k H_q(Z(G))
\xrightarrow{\phi_*} H_q(Z(G))\oplus [\bigoplus_{i=1}^k H_q(C(a_i))]
\to H_q(B(2,G)) \to\dots \eqno (III)$$ 
If $A$ and $B$ are finite abelian groups and $A\subset B$, then
$H_*(A)\subset H_*(B)$, whence it follows that $\phi_*$ is
injective, and so for each $q\ge 0$, $H_q(B(2,G))$ can be described
via the short exact sequence
$$0\to \bigoplus^k H_q(Z(G)) \xrightarrow{\phi_*} H_q(Z(G))
\oplus \bigoplus_{i=1}^k H_q(C(a_i)) \to H_q(B(2,G))\to 0.$$ 

Now consider the special case when $G$ is a $TC$ group with a
trivial center. In this situation the chain group $C_1(Y)$ is a free
$\mathbb ZG$--module, and the $p$--Sylow subgroups of the
centralizers $C_G(a_i)$ are $p$--Sylow subgroups for $G$.
\begin{lem}
Let $M$ denote a permutation $\mathbb Z G$--module of finite rank
and $\epsilon : M\to\mathbb Z$ the usual augmentation map onto the
trivial module. Then the map $\epsilon$ splits over $\mathbb ZG$ if
and only if $|G|$ divides the least common multiple of the orders of
the isotropy subgroups.
\end{lem}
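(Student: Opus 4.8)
The plan is to analyze the $\mathbb{Z}G$-splitting of the augmentation $\epsilon\colon M \to \mathbb{Z}$ by reducing it to a statement about cohomology and then to a local-at-each-prime condition. Write $M = \bigoplus_\alpha \mathbb{Z}[G/H_\alpha]$ for the isotropy subgroups $H_\alpha$. The map $\epsilon$ splits over $\mathbb{Z}G$ if and only if there exists $x \in M$ with $\epsilon(x) = 1$ that is $G$-invariant, i.e. $x \in M^G$; indeed, a $\mathbb{Z}G$-splitting $s\colon \mathbb{Z}\to M$ is determined by $s(1) \in M^G$, and $\epsilon(s(1)) = 1$ is exactly the splitting condition. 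So the first step is to identify $M^G$ and compute the image of $\epsilon|_{M^G}$ inside $\mathbb{Z}$.

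First I would observe that for a single permutation module $\mathbb{Z}[G/H]$, the invariants $(\mathbb{Z}[G/H])^G$ are generated by the orbit sum $N_H = \sum_{gH \in G/H} gH$, and $\epsilon(N_H) = [G:H]$, the number of cosets. Hence $\epsilon\bigl(M^G\bigr) = \sum_\alpha [G:H_\alpha]\,\mathbb{Z} = d\,\mathbb{Z}$, where $d = \gcd_\alpha [G:H_\alpha]$. Therefore $\epsilon|_{M^G}$ is surjective onto $\mathbb{Z}$ (equivalently, hits $1$) precisely when $\gcd_\alpha [G:H_\alpha] = 1$. The remaining step is purely number-theoretic: writing $|G| = [G:H_\alpha]\,|H_\alpha|$, the condition $\gcd_\alpha [G:H_\alpha] = 1$ says that for every prime $p$ dividing $|G|$, some index $[G:H_\alpha]$ is prime to $p$, which means the corresponding $H_\alpha$ contains a full Sylow $p$-subgroup, so that $|H_\alpha|$ is divisible by the full $p$-part of $|G|$. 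Equivalently, the $p$-part of the least common multiple $L = \operatorname{lcm}_\alpha |H_\alpha|$ equals the $p$-part of $|G|$ for every $p$, i.e. $|G|$ divides $L$. This is exactly the stated criterion.

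The main step to be careful about is the translation between ``$\epsilon$ splits over $\mathbb{Z}G$'' and the invariant-element condition, since one must check that a splitting can be taken $G$-equivariantly rather than merely over $\mathbb{Z}$: this is immediate because any $\mathbb{Z}G$-linear section $s$ is forced to send the generator $1$ of the trivial module to an invariant element, and conversely any invariant $x$ with $\epsilon(x)=1$ defines a $\mathbb{Z}G$-map by $1 \mapsto x$. I expect the genuine subtlety to lie in the number-theoretic equivalence of the last paragraph, namely verifying that $\gcd_\alpha [G:H_\alpha] = 1$ if and only if $|G| \mid \operatorname{lcm}_\alpha |H_\alpha|$; the forward direction uses that coprimality of the indices forces each Sylow $p$-part to be captured by some isotropy subgroup, and the reverse direction reads the same information off the $p$-adic valuations. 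Everything else is a routine identification of invariants of permutation modules with orbit sums.
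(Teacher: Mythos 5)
Your proof is correct, but it takes a genuinely different and more elementary route than the paper. You reduce the splitting of $\epsilon$ to the existence of a $G$-invariant element $x\in M^G$ with $\epsilon(x)=1$, compute $M^G$ explicitly via orbit sums $N_{H_\alpha}$ with $\epsilon(N_{H_\alpha})=[G:H_\alpha]$, and conclude that splitting is equivalent to $\gcd_\alpha [G:H_\alpha]=1$, which you then translate into the stated lcm condition by comparing $p$-adic valuations; in fact your converse is completely constructive, since writing $1=\sum_\alpha c_\alpha [G:H_\alpha]$ gives the section $1\mapsto \sum_\alpha c_\alpha N_{H_\alpha}$. The paper instead works with Tate cohomology: it identifies $\widehat{H}^0(G,M)\cong \bigoplus_i \mathbb{Z}/|H_i|$ (via Shapiro's lemma) and $\widehat{H}^0(G;\mathbb{Z})\cong\mathbb{Z}/|G|$, observes that a splitting of $\epsilon$ forces $\mathbb{Z}/|G|$ to be a direct summand of $\bigoplus_i \mathbb{Z}/|H_i|$ (hence $|G|$ divides the lcm of the $|H_i|$), and for the converse invokes the main result of the reference on cohomological exponents of $\mathbb{Z}G$-lattices to split off a trivial rank-one submodule realizing a class of exponent $|G|$. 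Your argument avoids that external input entirely and is self-contained, exploiting the fact that $M$ is a permutation module so its invariants are transparent; the paper's cohomological method is heavier here but reflects machinery that applies to general lattices rather than just permutation modules. The two criteria agree, since $\widehat{H}^0(G,\mathbb{Z}[G/H])=M^G/N_GM$ recovers exactly the orbit-sum computation you carried out.
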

\begin{proof}
We can write $M = \bigoplus_{i=1}^k \mathbb Z[G/H_i]$ where the
$H_i\subset G$ are subgroups. If the map $\epsilon$ splits, then
$\mathbb Z/|G|$ is a direct summand in $\widehat{H}^0(G,M)\cong
\bigoplus_{i=1}^k \mathbb Z/|H_i|$ hence $|G|$ divides the l.c.m. of
the orders of the subgroups $H_1,\dots, H_k$. Conversely, suppose
that $|G|$ divides this l.c.m. This means that it is possible to
find a summand $\mathbb Z/|G|\subset \bigoplus_{i=1}^k \mathbb
Z/|H_i|$ which maps bijectively onto $\widehat{H}^0(G;\mathbb Z)$
under the map induced in cohomology by $\epsilon$. Using the main
result in \cite{adem}, there exists a trivial submodule of rank one
$T\subset M$ which represents this class of highest exponent and
splits off as a direct summand of $M$; this defines the desired
splitting.
\end{proof}

Consider the case when $G$ is a TC group with trivial center. The
kernel of the augmentation map $\epsilon: C_0(Y)\to \mathbb Z$ is
isomorphic to the quotient module $C_1(Y)/H_1(E(2,G))$, where $C_1(Y)$ is
a free $\mathbb ZG$ module. Applying the previous result yields a
$G$--splitting:
$C_0(Y)\cong \mathbb Z \oplus [C_1(Y)/H_1(E(2,G))]$.
For the statement of the following corollary recall that given $M$ a
$\mathbb ZG$--module, a finitely generated projective module $F$,
and a surjection $f:F\to M$, then the kernel of $f$ is uniquely
defined up to projective summands, and is denoted $\Omega^1(M)$ (see
\cite{AM}, Chapter II).
Similarly if $N\to F'$ is a monomorphism and $F'$ is
projective, then the cokernel is uniquely defined up to projective
summands and denoted $\Omega^{-1}(N)$.

\begin{cor}
If $G$ is a $TC$ group with trivial center, then $H_1(E(2,G))$ is a
$\mathbb Z$--torsion free module which up to projective factors is
isomorphic to $\Omega^1(I)$, where $I$ denotes the kernel of the
augmentation map $C_0(Y)\to \mathbb Z$. Furthermore up to
projective summands there is a
splitting of $\mathbb ZG$--modules
$$\Z[G/Z(G)]\oplus
\bigoplus_{1\leq i\leq k}\Z[G/C(a_i)]
\cong \mathbb Z\oplus\Omega^{-1}(H_1(E(2,G))).$$ 
\end{cor}

\begin{exm}
$G=A_5$, the alternating group on five letters is a $TC$
group with trivial center. 
In this case $H_1(E(2,A_5))$ is a module of rank equal to $854$
and
$$H_i(B(2,A_5))\cong [H_i(\mathbb Z/2\times\mathbb Z/2)]^5
\oplus [H_i(\mathbb Z/3)]^{10} \oplus [H_i(\mathbb Z/5)]^6$$
which in turn is isomorphic to
$H_{i-1}(A_5,H_1(E(2,A_5)))\oplus H_i(A_5)$.
\end{exm}

\begin{exm}
Let $G=Q_8$, the quaternion group of order eight. In this case the
center is $\mathbb Z/2$, and there are three maximal abelian
subgroups (each of order four) intersecting along this central
subgroup. In this case $B(2,Q_8)\cong B(*^3_{\mathbb Z/2}\mathbb
Z/4)$, the amalgamation of the three groups along the common
$\mathbb Z/2$. The module $H_1(E(2,Q_8))$ is of rank equal to three. It
fits into an exact sequence
$$0\to H_1(E(2,Q_8))\to \mathbb Z[Q_8/\mathbb Z/2]^3
\to \mathbb Z [Q_8/\mathbb Z/2] \oplus \bigoplus_{i=1}^3
Z[Q_8/C(a_i)] \to\mathbb Z\to 0$$ and $H_i(B(2,Q_8))$ can be
computed as
\[ H_i(B(2,Q_8); \Z)
\cong\left\{\begin{array}{r@{\quad\hbox{if}\quad}l}
\Z & i=0\\
\mathbb Z/4\oplus\mathbb Z/2\oplus\mathbb Z/2  & i>0~\rm{odd}\\
0 & i>0~\rm{even}
\end{array}\right.\]
\end{exm}

Note that the sequence (I) also allows us to compute the character
of the representation
$G\to Aut(H_1(E(2,G)\otimes\mathbb C)$,
which will be denoted by $\mathcal{X}_{E(2,G)}$. 
If $G$ is a TC group the kernel of the
character $\mathcal{X}_{E(2,G)}$ is precisely the center of $G$
and 
thus
$G/Z(G)\to Aut(H_1(E(2,G)))$
is a faithful representation.
This result shows that the representation $G\to
Out(\pi_1(E(2,G)))$ is faithful when $G$ is a TC group with trivial
center; moreover, the representation $\pi_1(B(2,G))\to
Aut(\pi_1(E(2,G)))$ is faithful as well.


\begin{thebibliography}{999}

\bibitem{adem} A.~Adem, {\em Cohomological exponents of $ZG$-lattices},
J. Pure Appl. Algebra \textbf{58}(1989), no. 1, 1--5.

\bibitem{ABBCG} A.~Adem, M.~Bendersky, A.~Bahri. F.~R.~Cohen
and S.~Gitler, {\em On decomposing suspensions of simplicial spaces},
Bol. Soc. Mat. Mex. (3) Vol.15 (2009), 91-102. 

\bibitem{AC} A.~Adem, F.~Cohen, {\em Commuting Elements and
Spaces of Homomorphisms}, Mathematische Annalen \textbf{338} (2007),
587--626.

\bibitem{adem-cohen-gomez} A.~Adem, F.~Cohen, J.~G\'omez,
{\em Stable Spittings, Spaces of Representations and Almost
Commuting Elements in Lie Groups}, 
Math. Proc. Camb. Phil. Soc. Vol. 149 (2010), 455-490.

\bibitem{AM} A.~Adem, R.J.~Milgram, \textbf{Cohomology
of Finite Groups}, Springer-Verlag Grundlehren 309 (2004).

\bibitem{baird} T.~Baird, \textsl{Cohomology of the space of
commuting $n$-tuples in a compact Lie group}, Algebr. Geom. Topol. 7
(2007), 737--754.

\bibitem{Belyi} V.~Belyi,  \emph{ On Galois extensions of a maximal
cyclotomic field}, Math. USSR Izv., \textbf{14}(1980), 247-256.

\bibitem{Br} K.~Brown, \textbf{Cohomology of Groups},
Springer--Verlag GTM 87 (1982).

\bibitem{DF} E.~Dror--Farjoun, \textbf{Cellular Spaces,
Null Spaces and Homotopy Localization}, Springer LNM 1622 (1996).

\bibitem{Feit-Thompson} W.~Feit, J. G.~Thompson, {\em Solvability of groups of odd order}, 
Pacific J. Math. 13 (1963), 775--1029.

\bibitem{hatcher} A.~Hatcher, \textbf{Algebraic Topology},
Cambridge University Press (2002).

\bibitem{Ihara} Y.~Ihara, \emph{On the embedding of $Gal(\bar{\mathbb Q}/
\mathbb Q)$ in $\widehat{GT}$}, in: The Grothendieck Theory of
Dessins d'Enfants, London Mathematical Society Lecture Notes Vol.
200, Cambridge University Press, 1994.

\bibitem{MacLane} S.~Mac Lane, {\em The Milgram bar construction as a tensor product of functors},
pp. 135--152, in Lecture Notes in Mathematics, Vol. 168 Springer,
Berlin, 1970.

\bibitem{may2} J.~P.~May {\em $E\sb{\infty }$ spaces, group completions,
and permutative categories},  New developments in topology (Proc.
Sympos. Algebraic Topology, Oxford, 1972),  pp. 61--93. London Math.
Soc. Lecture Note Ser., No. 11, Cambridge Univ. Press, London, 1974.

\bibitem{mccleary} J.~McCleary, \textbf{A user's guide to spectral sequences},
Cambridge Studies in Advanced Mathematics Vol. 58, Cambridge University
Press, 2001.

\bibitem{milgram} R.~J.~Milgram, {\em The bar construction and abelian H-spaces}, Ill. J. Math. \textbf{11} (1967),
242--250.

\bibitem{milnor} J.~Milnor, {\em The geometrical realization of a semi-simplicial
complex}, Ann. of Math., \textbf{65} (1957), 357--362.

\bibitem{PY} J.~Pakianathan and E.~Yalcin, {\em On commuting and noncommuting complexes},
J. Algebra 236 (2001), 396--418.

\bibitem{P} V.~Puppe, {\em A remark on homotopy fibrations},
Manuscripta Math. \textbf{12} (1974), 113--120.

\bibitem{QP} D.~Quillen, B.B.~Venkov, {\em Cohomology of finite groups and
elementary abelian subgroups}, Topology 11 (1972), 317--318.

\bibitem{Sc} R.~Schmidt,
{\it Subgroup lattices of groups}, de Gruyter Expositions in
Mathematics 14, Berlin, 1994.

\bibitem{Serre} J.-P.~Serre, \textbf{Trees}, Springer-Verlag,
New York, 1980.

\bibitem{Snaith} V.~Snaith, \textbf{ Algebraic cobordism and K-theory}, Mem. Am. Math.
Soc. 221 (1979).

\bibitem{Solomon} R.~Solomon, {\em A brief history of the classification of the finite simple groups},  
Bull. Amer. Math. Soc. (N.S.) 38 (2001), no. 3, 315--352. 

\bibitem{Steenrod}  N.~E.~Steenrod, {\em Milgram's classifying space of a topological group}, Topology 7 (1968),
349--368.

\bibitem{Suzuki} M.~Suzuki, {\em The nonexistence of a certain type of simple groups of odd order}, 
Proc. Amer. Math. Soc. 8 (1957), 686--695.

\bibitem{Torres} E.~Torres--Giese, \textbf{Spaces of Homomorphisms
and Group Cohomology}, Ph.D. Thesis, University of British Columbia
(2007).

\bibitem{WZZ} V.~Welker, G.~Ziegler, R.~\v{Z}ivaljevi\'c, {\em Homotopy
colimits-comparison lemmas for combinatorial applications}, J.~Reine
Angew. Math., 509(1999), 117-149.


\end{thebibliography}
\end{document}